\documentclass[12pt]{article}
\usepackage[utf8]{inputenc}
\usepackage{amsmath}
\setcounter{MaxMatrixCols}{11}
\usepackage[english]{babel}
\usepackage{url}

\usepackage[a4paper,top=3cm,bottom=2cm,left=3cm,right=3cm,marginparwidth=1.75cm]{geometry}

\usepackage{amsmath}
\usepackage{graphicx}
\usepackage[colorinlistoftodos]{todonotes}
\usepackage[colorlinks=true, allcolors=blue]{hyperref}
\usepackage{amssymb}
\usepackage{amssymb}
\usepackage{amsthm}
\usepackage{authblk}
\usepackage{amsmath}
\usepackage{amsfonts}
\usepackage{amssymb}
\usepackage{amscd}
\usepackage{latexsym}
\usepackage{breqn}

\theoremstyle{plain}
\newtheorem{thm}{Theorem}[section]
\newtheorem{lem}[thm]{Lemma}
\newtheorem{prop}[thm]{Proposition}
\newtheorem{cor}[thm]{Corollary}

\theoremstyle{definition}
\newtheorem{conj}{Conjecture}[section]

\theoremstyle{remark}
\newtheorem*{rem}{Remark}

\title{Symbols of compact truncated Toeplitz operators}
\author{Ryan O'Loughlin \\ E-mail address: \href{mailto:R.OLoughlin@leeds.ac.uk}{R.OLoughlin@leeds.ac.uk}}
\affil{School of Mathematics, University of Leeds, Leeds, LS2 9JT, U.K.}
\date{}
\setlength{\marginparwidth}{2cm}
\begin{document}
\maketitle
\begin{abstract}
This paper characterises the dual of the model space $K_I^1$, where $I$ is an inner function, intersected with the shifted Hardy space, $z H^1$. With this duality result, it is then shown that every bounded truncated Toeplitz operator on the model space $K_I^2$ has a bounded symbol if and only if every compact truncated Toeplitz operator on $K_I^2$ has a symbol which is of the form $I$ multiplied by a continuous function.

 \vskip 0.5cm
\noindent Keywords: Hardy space, truncated Toeplitz operator, model space.
 \vskip 0.5cm
\noindent MSC: 30H10, 47B35, 46E20.
\end{abstract}

\maketitle

\section{Introduction}
In this paper we provide two new overlapping results. We first characterise the dual space of any given backward shift invariant subspace of the Hardy space $H^1$ intersected with  $ z H^1$. We then use our duality result to study the question: when does a bounded truncated Toeplitz operator have a bounded symbol? This question has generated much research interest and is one of the most fundamental problems concerning truncated Toeplitz operators. The main result of this paper shows that every bounded truncated Toeplitz operator on $K_I^2$ has a bounded symbol if and only if every compact truncated Toeplitz operator on $K_I^2$ has a symbol which is of the form $I \phi$ where $\phi$ is a continuous function on the unit circle.

Throughout we let $1 \leqslant p < \infty $ and we denote the unit circle by $\mathbb{T}$. All $L^p$ spaces, and $L^{\infty}$, are assumed to be on the unit circle, $\mathbb{T}$. We denote the Hardy space on the unit circle by $H^p$ and we refer the reader to \cite{duren1970theory, nikolski2002operators} for background theory on the Hardy space. Throughout we let $I$ be an inner function (i.e. a function in $H^2$ which is unimodular on $\mathbb{T}$). It is shown in Chapter 5 of \cite{cima2000backward} that all backward shift invariant subspaces of $H^p$ are of the form $K_I^p: = H^p \cap I \overline{H^p_0}$, where $\overline{H^p_0} := \{ \overline{f} : f \in z H^p \}$. We call the space $K_I^p$ a model space, and we refer the reader to \cite{cima2000backward} for background theory on model spaces. Integrals will be evaluated with respect to $m$, where $m$ is the normalised Lebesgue measure on $\mathbb{T}$. We define $C ( \mathbb{T})$ to be the continuous functions on the unit circle.

We let BMOA denote the set of all analytic functions of bounded mean oscillation, i.e., $ f \in \text{BMOA}$ means $f \in H^2$ and 
$$
\sup_A \frac{1}{|A|} \int_A |f - f_A | dm < \infty ,
$$
where the supremum is taken over all arcs $A \subset \mathbb{T}$ and 
$$
f_A := \frac{1}{|A|} \int_A f dm .
$$
It can be checked that BMOA is a linear vector space and an easy adaptation of Proposition 2.5 in \cite{BMOAbook} shows that when equipped with the norm
$$
\| f \|_{*} := \left| \int_{\mathbb{T}} f dm \right| + \sup_A \frac{1}{|A|} \int_A |f - f_A | dm ,
$$
BMOA becomes a Banach space. 

When $1< p < \infty$ we let $P: L^p \to H^p$ be the Riesz projection, and $P_{I} = P I ( Id - P)\overline{I}$ be the projection from $L^p$ to $K_I^p$. When $p = 2$, $P$ and $P_I$ are in fact orthogonal projections. As noted in Section 2 of \cite{oloughlin2020matrixvalued}, these projections can be expressed as integral operators independent of $p \in ( 1 , \infty)$. With these projections, for $1 < p < \infty$ we can write $L^p = \overline{H^p_0} \oplus H^p$ or $L^p = \overline{H^p_0} \oplus K_I^p \oplus I H^p$ (see page 20 of \cite{mythesis} for details) and furthermore for $p=2$ the summations are orthogonal summations.

For $\phi \in L^2$, we define the \textit{truncated Toeplitz operator} $A_{\phi}$ on bounded functions from $K_I^2$ by 
$$
A_{\phi}(f) = P_I ( \phi f ), \hskip 2cm f \in K_{I}^2 \cap L^{\infty},
$$
where here $P_I : L^2 \to K_I^2$. In contrast to the Toeplitz operators on $H^2$ the operator $A_{\phi}$ may be extended to a bounded operator on $K_I^2$ even for some unbounded symbols.

We use the notation $\mathcal{T}(I)$ to denote the space of bounded truncated Toeplitz operators on $K_I^2$ and $\mathcal{T}_c(I)$ to denote the space of compact truncated Toeplitz operators on $K_I^2$.
Previous results in \cite{dualofmodelspacep1} identify the dual space of $K_I^1 \cap z H^1$ for a certain class of inner functions. 

The results of Section 2 give an alternative description of the space dual to $K_I^1 \cap z H^1$, and furthermore this description is valid for all inner functions. Specifically, the description of the space dual to $K_I^1 \cap z H^1$ given in Section 2 initially realises $(K_I^1 \cap z H^1)^*$ as a quotient space and then we further show that $(K_I^1 \cap z H^1)^*$ may be realised as a space of analytic functions.

In Section 3 we will see that this realisation of $(K_I^1 \cap z H^1)^*$ lays the foundation for us to prove the main result of this paper, which is every bounded truncated Toeplitz operator on $K_I^2$ has a bounded symbol if and only if every compact truncated Toeplitz operator on $K_I^2$ has a symbol lying in $I C ( \mathbb{T})$.
\section{Duality results}

In this section we describe the dual and predual of the space $K_I^1 \cap z H^1$. This description is a crucial tool needed in Section 2, where we study the symbols of truncated Toeplitz operators.

For not necessarily closed subspaces $M \subseteq X$ and $N \subseteq X^*$ of a Banach space $X$ we define the annihilator
$$
\mathcal{M}^{\perp}:=\left\{\ell \in X^{*}: \ell(x)=0 \quad \forall x \in \mathcal{M}\right\}
$$
and the pre-annihilator
$$
^\perp \mathcal{N}:=\{x \in X: \ell(x)=0 \quad \forall \ell \in \mathcal{N}\}.
$$
We note that with the above definitions the Hahn-Banach separation Theorem implies that $^\perp(M^{\perp})$ is the norm closure of $M$.

\begin{lem}\label{dense}
$K_I^2 \cap z H^2$ is dense in $K_I^1 \cap z H^1$.
\end{lem}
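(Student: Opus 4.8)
The plan is to prove the density in two stages: first establish that $K_I^2$ is dense in $K_I^1$, and then deduce the stated result by a codimension-one adjustment that installs the vanishing at the origin. Throughout I write $k_w^I$ for the reproducing kernel of $K_I^2$, so that $k_w^I(z) = \frac{1 - \overline{I(w)} I(z)}{1 - \bar w z}$ for $w$ in the open unit disc $\mathbb{D}$; for each such $w$ the function $k_w^I$ is bounded on $\mathbb{T}$ and lies in $K_I^2$, hence in $K_I^2 \cap K_I^1$.

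First I would show that the linear span $M$ of $\{ k_w^I : w \in \mathbb{D} \}$ is dense in $K_I^1$. Since $K_I^1$ is a closed subspace of $L^1$, every functional on it is represented by some $\psi \in L^\infty$ through the pairing $f \mapsto \int_{\mathbb{T}} f \bar\psi \, dm$, and by the noted identity ${}^\perp(M^\perp) = \overline{M}$ it suffices to show that any such $\psi$ annihilating every $k_w^I$ in fact annihilates all of $K_I^1$. Writing $\int_{\mathbb{T}} k_w^I \bar\psi \, dm = \overline{(P_I \psi)(w)}$, the hypothesis forces the $K_I^2$-function $P_I \psi$ to vanish throughout $\mathbb{D}$, so that $\psi \perp K_I^2$ in $L^2$.

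The crux of the argument is to upgrade this $L^2$-orthogonality to annihilation of the whole of $K_I^1$. Using the decomposition $L^2 = \overline{H^2_0} \oplus K_I^2 \oplus I H^2$ I would write $\psi = \bar a + I b$ with $a \in z H^2$ and $b \in H^2$, so that for $f \in K_I^1$ one has $\int_{\mathbb{T}} f \bar\psi \, dm = \int_{\mathbb{T}} f a \, dm + \int_{\mathbb{T}} f \bar I \bar b \, dm$. The first integral vanishes by comparing the Fourier supports of $f \in H^1$ and $a \in z H^2$, and the second vanishes after substituting $\bar I f = \bar c$ with $c \in z H^1$ (the defining relation for $f \in K_I^1$) and again comparing the Fourier supports of $c$ and $b$. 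This gives $\psi \perp K_I^1$ and hence $\overline{M} = K_I^1$; since $M \subseteq K_I^2$, the density of $K_I^2$ in $K_I^1$ follows.

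Finally I would install the vanishing at the origin. The evaluation $\Lambda(f) = f(0) = \int_{\mathbb{T}} f \overline{k_0^I} \, dm$ is a bounded functional on $K_I^1$ with $K_I^1 \cap z H^1 = \ker \Lambda$ and $K_I^2 \cap z H^2 = \ker \Lambda \cap K_I^2$, and (discarding the trivial case in which $I$ is constant, where $K_I = \{0\}$) we have $\Lambda(k_0^I) = 1 - |I(0)|^2 \neq 0$. Given $f \in K_I^1 \cap z H^1$, I would take $f_n \in K_I^2$ with $f_n \to f$ in $K_I^1$ and replace $f_n$ by $f_n - \Lambda(f_n)\, k_0^I / \Lambda(k_0^I)$; these corrected functions lie in $K_I^2 \cap z H^2$ and still converge to $f$, because $\Lambda(f_n) \to \Lambda(f) = 0$. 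The only subtlety worth checking carefully is the pair of Fourier-support computations in the previous paragraph, which is where the model-space structure (the relation $\bar I f \in \overline{H^1_0}$) is genuinely used.
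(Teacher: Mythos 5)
Your overall architecture is sound in its outer stages: the reduction via reproducing kernels to the claim that any $\psi \in L^{\infty}$ with $P_I \psi \equiv 0$ must annihilate all of $K_I^1$, and the final codimension-one correction $f_n \mapsto f_n - \Lambda(f_n)k_0^I/\Lambda(k_0^I)$, are both correct (and the latter is a clean way to pass from $K_I^1$ to $K_I^1 \cap zH^1$, where the paper instead works with the $z$-shifted spaces throughout). But the crux step has a genuine gap. Having written $\psi = \bar a + Ib$ with $a \in zH^2$, $b \in H^2$, you split
$$
\int_{\mathbb{T}} f \bar\psi \, dm = \int_{\mathbb{T}} f a \, dm + \int_{\mathbb{T}} f \bar I \bar b \, dm
$$
and dispose of each term by ``comparing Fourier supports''. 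Here $f$ is only in $H^1$ while $a$ and $b$ are only in $H^2$, so the products $fa$ and $\bar c \bar b$ (where $\bar c = \bar I f$) lie merely in $L^{2/3}$ in general: the two integrals on the right need not converge, and a non-integrable function has no Fourier coefficients, so there is no Fourier support to compare. (Concretely, $f$ of the shape $(1-z)^{-1}\bigl(\log\frac{2}{1-z}\bigr)^{-2}$ lies in $H^1$ and $a = z\log\frac{2}{1-z}$ lies in $zH^2$, yet $fa \notin L^1$.) Only the sum of the two terms is known to be integrable, because $\psi \in L^{\infty}$. Nor does the standard dilation trick rescue the splitting: one does have $\int f\bar\psi\,dm = \lim_{r\to 1}\int f_r\bar\psi\,dm$ and $\int f_r a\,dm = 0$ for each $r<1$, but in the surviving term $\int f_r \bar I \bar b\,dm$ you cannot substitute $\bar I f_r = \bar c_r$, because dilation does not commute with multiplication by $\bar I$; the relation $\bar I f = \bar c$ holds only on the boundary. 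Proving that this limit vanishes is essentially the whole content of the lemma, not a routine verification.

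The missing input is exactly what the paper's proof supplies. Since $\psi \in L^{\infty}$, its analytic projection $P\psi = Ib$ lies in $P(L^{\infty}) = \mathrm{BMOA} = (H^1)^*$ (Fefferman--Stein), so the functional $f \mapsto \int f\bar\psi\,dm$ on $H^1$ is the $H^1$--BMOA pairing against $Ib$; and because $H^1$ is invariant under multiplication by $I$, BMOA is invariant under the Toeplitz operator $T_{\overline{I}}$, which gives $b = T_{\overline{I}}(Ib) \in \mathrm{BMOA}$ and allows the pairing against model-space elements to be evaluated by a legitimate limiting argument inside that duality framework. In other words, the step you flag as ``the only subtlety worth checking carefully'' is precisely where the theorem lives; the check as described does not go through, and filling it forces you into the BMOA machinery (or an equivalent substitute) on which the paper's proof is built.
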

\begin{proof}
Clearly we have $I \overline{z H^2_0} \cap H^2 \subset I \overline{z H^1_0} \cap H^1$. If we show 
$$
(I \overline{z H^2_0} \cap H^2)^{\perp} \subseteq (I \overline{z H^1_0} \cap H^1)^{\perp},
$$
then as $^\perp(M^{\perp})$ is the norm closure of $M$ for a subspace $M$ we obtain that  $I \overline{ z H^2_0 } \cap H^2$ is dense in $I \overline{ z H^1_0 } \cap H^1$.

A result originally due to Fefferman-Stein \cite{fefferman1972h}, which can also be found as Theorem 2.2 in Chapter 9 of \cite{topicsincomlpexanalysis}, shows that the dual space of $H^1$ is BMOA. By duality in the $H^2$ sense (which agrees with the $H^1$-BMOA duality pairing) we have if $g \in (I \overline{z H^2_0} \cap H^2)^{\perp}$ then $ g \in \text{BMOA}$ and $\langle I\overline{z H^2_0} , g \rangle = 0 $, which implies $ g \in \frac{I}{z}H^2 \cap \text{BMOA}$ so $g z \in \text{BMOA} \cap I H^2$. If we define $G:= \frac{gz}{I}$, then as the space $H^1$ is invariant by multiplication by $I$, we can deduce that BMOA (being the dual of $H^1)$ is invariant under the Toeplitz operator $T_{\overline{I}}$ and so $G \in \text{BMOA}$. If $f \in I \overline{z H^1_0} \cap H^1 $ then $\frac{fz}{I}= \overline{F}$ for some $F \in H^1_0$, and if we denote $\langle \hskip 0.1cm , \hskip 0.1cm \rangle $ to be the $H^1$-BMOA duality pairing then we have 
$$
\langle f,g \rangle = \langle \frac{fz}{I} , \frac{gz}{I} \rangle = \langle \overline{F} , G \rangle =0.
$$
Thus, we have established $I \overline{ z H^2_0 } \cap H^2$ is dense in $I \overline{ z H^1_0 } \cap H^1$. Now clearly as multiplication by $z$ is an isometry we have $K_I^2 \cap z H^2$ is dense in $K_I^1 \cap z H^1$.

\end{proof}

\begin{thm}\label{BMOA1dual}
$l \in (K_I^1 \cap z H^1)^* $ if and only if there is a unique $ v + Q \in L^{\infty} / Q$, where $Q = L^{\infty} \cap (H^2 + \overline{I H^2}) $, such that
$$
l (f) = l_{v+Q} (f) : = \int_{\mathbb{T}} f({\zeta}) {v( \zeta)} \hskip 0.1cm dm(\zeta).
$$
Furthermore $\|l_{v+Q} \| = \|v + Q \|_{L^{\infty}/Q}$.
\end{thm}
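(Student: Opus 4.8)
The plan is to realise $M := K_I^1 \cap zH^1$ as a closed subspace of $L^1$ and invoke the standard isometric identification $M^{*} \cong (L^1)^{*}/M^{\perp} = L^{\infty}/M^{\perp}$, thereby reducing the theorem to computing the bilinear annihilator $M^{\perp} = \{ v \in L^{\infty} : \int_{\mathbb{T}} f v \, dm = 0 \text{ for all } f \in M \}$. First I would note that $M$ is closed in $L^1$, being the intersection of the closed subspaces $K_I^1$ and $zH^1$, so the general subspace-duality theorem applies. Given $l \in M^{*}$, a Hahn--Banach extension to $(L^1)^{*} = L^{\infty}$ produces $v \in L^{\infty}$ with $l = l_{v+Q}$; the coset representing $l$ is unique because two representatives differ by an element of $M^{\perp}$; and the norm identity $\|l_{v+Q}\| = \|v+Q\|_{L^{\infty}/M^{\perp}}$ is precisely the isometry furnished by that theorem --- the inequality $\leqslant$ coming from $|\int_{\mathbb{T}} f(v+q)\,dm| \leqslant \|f\|_1 \|v+q\|_{\infty}$ for $q \in M^{\perp}$, and the reverse from choosing a norm-preserving extension. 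Everything therefore reduces to proving $M^{\perp} = Q$.

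To identify $M^{\perp}$, I would proceed as follows. For fixed $v \in L^{\infty}$ the functional $f \mapsto \int_{\mathbb{T}} f v\,dm$ is $L^1$-continuous, and $K_I^2 \cap zH^2$ is dense in $M$ by Lemma~\ref{dense}; hence $v \in M^{\perp}$ if and only if $\int_{\mathbb{T}} f v\,dm = 0$ for every $f$ in the dense subset $K_I^2 \cap zH^2 = K_I^2 \cap H^2_0$. For such $f$, and for $v \in L^{\infty} \subseteq L^2$, we may rewrite $\int_{\mathbb{T}} f v\,dm = \langle f, \bar v\rangle_{L^2}$, so the condition becomes $\bar v \perp_{L^2} (K_I^2 \cap H^2_0)$, that is, $\bar v \in (K_I^2 \cap H^2_0)^{\perp}$. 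Taking complex conjugates (a conjugate-linear isometry of $L^2$) and intersecting with $L^{\infty}$ then reduces the entire problem to establishing $(K_I^2 \cap H^2_0)^{\perp} = \overline{H^2} + IH^2$: granting this, $v$ lies in the conjugate of $\overline{H^2} + IH^2$, which is $H^2 + \overline{IH^2}$, whence $M^{\perp} = L^{\infty} \cap (H^2 + \overline{IH^2}) = Q$.

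For the orthogonal-complement computation I would apply the Hilbert-space identity that $(A \cap B)^{\perp}$ equals the $L^2$-closure of $A^{\perp} + B^{\perp}$, with $A = K_I^2$ and $B = H^2_0$. The decomposition $L^2 = \overline{H^2_0} \oplus K_I^2 \oplus IH^2$ recorded in the introduction gives $(K_I^2)^{\perp} = \overline{H^2_0} \oplus IH^2$, while $(H^2_0)^{\perp} = \overline{H^2}$; summing and absorbing $\overline{H^2_0} \subseteq \overline{H^2}$ yields $(K_I^2)^{\perp} + (H^2_0)^{\perp} = \overline{H^2} + IH^2$. The crucial point --- and the step I expect to be the main obstacle --- is that this sum is already closed, so the closure may be dropped and the annihilator is described by the genuinely closed space $H^2 + \overline{IH^2}$ rather than merely its closure. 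I would prove closedness by writing $\overline{H^2} = \mathbb{C} \oplus \overline{H^2_0}$ and noting that $\overline{H^2_0} \oplus IH^2$ is an orthogonal, hence closed, sum in $L^2$, so that $\overline{H^2} + IH^2 = (\overline{H^2_0} \oplus IH^2) + \mathbb{C}$ is the sum of a closed subspace and a one-dimensional subspace and is therefore closed. This completes the identification $(K_I^2 \cap H^2_0)^{\perp} = \overline{H^2} + IH^2$, and with it, after conjugation, the equality $M^{\perp} = Q$ and the theorem.
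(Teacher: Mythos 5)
Your proof is correct, and its skeleton coincides with the paper's: both realise $(K_I^1 \cap zH^1)^*$ as $L^{\infty}/M^{\perp}$ via the closed-subspace duality theorem (Theorem 4.9 of \cite{rudinsfunanal}), and both use Lemma~\ref{dense} to reduce the computation of the annihilator $M^{\perp}$ to testing against $K_I^2 \cap zH^2$, where Hilbert-space tools apply. The difference is in how the resulting $L^2$-annihilator is identified with $Q$. The paper argues element-wise: it decomposes an annihilating $b$ as $g + \overline{I\varphi_1 + \varphi_2}$ with $g, \varphi_1 \in H^2$, $\varphi_2 \in K_I^2$, kills the first two pairings by orthogonality, and identifies the surviving component $\varphi_2$ as a multiple of the reproducing kernel $k_0^I = 1 - \overline{I(0)}I$, which is then absorbed into $H^2 + \overline{IH^2}$. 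You instead invoke the lattice identity $(A \cap B)^{\perp} = \overline{A^{\perp} + B^{\perp}}$ with $A = K_I^2$, $B = H^2_0$, which shifts the entire burden onto showing that $\overline{H^2} + IH^2$ is already closed; your argument for this --- writing it as $(\overline{H^2_0} \oplus IH^2) + \mathbb{C}$, an orthogonal (hence closed) sum perturbed by a one-dimensional subspace --- is valid. The two routes are essentially dual to each other: the one-dimensional space $\mathbb{C}$ of constants in your closedness step plays exactly the role of $\mathbb{C}k_0^I$ in the paper (indeed $1 = k_0^I + \overline{I(0)}I$, so the two spaces agree modulo $IH^2$). What your version buys is the elimination of reproducing kernels and a transparent explanation of why the annihilator needs no closure operation; what the paper's version buys is a concrete, self-contained computation that does not rely on the general facts that $(A\cap B)^{\perp}$ is the closure of $A^{\perp}+B^{\perp}$ and that a closed subspace plus a finite-dimensional one is closed.
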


\begin{proof}
Clearly as $v \in L^{\infty}$ this will define a bounded linear functional. This functional is well defined as if $v_2$ is another representative of $v$ then $v - v_2 \in Q$ but for all $f$ in $K_I^2 \cap z H^2$ and any $q \in Q$ by orthogonality in the $H^2$ sense we have 
$\int_{\mathbb{T}} f({\zeta}) {q( \zeta)} \hskip 0.1cm dm(\zeta) = 0$ and so by the above lemma we must indeed have $\int_{\mathbb{T}} f({\zeta}) ({v( \zeta) - v_2 ( \zeta)}) \hskip 0.1cm dm(\zeta) = 0$ for all $f \in K_I^1 \cap z H^1$.

To prove the converse implication we must find the annihilator of $K_I^1 \cap z H^1$. The annihilator of $K_I^1 \cap z H^1$ (viewed as a subspace of $L^1$) is given by  $(K_I^1 \cap z H^1)^{\perp} = \{ b \in L^{\infty} : \int_{\mathbb{T}} f({\zeta}) {b( \zeta)} \hskip 0.1cm dm(\zeta) = 0 \text{ for all } f \in K_I^1 \cap z H^1 \}$, but as a result of our previous lemma this is equal to $\{ b \in L^{\infty} : \int_{\mathbb{T}} f({\zeta}) {b( \zeta)} \hskip 0.1cm dm(\zeta) = 0 \text{ for all } f \in K_I^2 \cap z H^2 \}$.

We now argue that $$\{ b \in L^{\infty} : \int_{\mathbb{T}} f({\zeta}) {b( \zeta)} \hskip 0.1cm dm(\zeta) = 0 \text{ for all } f \in K_I^2 \cap z H^2 \} = Q:= L^{\infty} \cap (H^2 + \overline{I H^2}).$$

A straightforward computation verifies the $\supseteq$ inclusion. Before we show the $\subseteq$ inclusion we first note that $f \in K_{I}^{2} \cap z H^{2}$ if and only if $f \in K_{I}^{2}$ and $f(0)=0$, which is equivalent to $f \in K_{I}^{2}$ and $f \perp k_{0}^{I} $, where $1-\overline{I(0)} I(z) = k_{0}^{I}(z) \in K_I^2$ is the reproducing kernel at $0$, (see Chapter 5 of \cite{modelspacesgarcia} for details on the reproducing kernels for model spaces). Hence $K_{I}^{2} \cap z H^{2}=K_{I}^{2} \ominus \mathbb{C} k_{0}^{I}$, giving $K_{I}^{2} \ominus\left(K_{I}^{2} \cap z H^{2}\right)=\mathbb{C} k_{0}^{I} .$ Now to prove the $\subseteq$ inclusion, take $b \in L^{\infty}$ such that $\langle f, \bar{b}\rangle_{2}=0$ for every $f \in K_{I}^{2} \cap z H^{2} .$ Decompose $b=g+\overline{I \varphi_{1}+\varphi_{2}}$, with $g, \varphi_{1} \in H^{2}$ and $\varphi_{2} \in K_{I}^{2} .$ Hence, for every $f \in K_{I}^{2} \cap z H^{2}$, we have
$$
0=\langle f, \bar{g}\rangle_{2}+\left\langle f, I \varphi_{1}\right\rangle_{2}+\left\langle f, \varphi_{2}\right\rangle_{2}.
$$
The first term $\langle f, \bar{g}\rangle_{2}$ is zero because $f \in z H^{2}$ and $g \in H^{2}$, the second term $\left\langle f, I \varphi_{1}\right\rangle_{2}$ is also zero because $f \in K_{I}^{2}$ and $I \varphi_{1} \in I H^{2}$. Thus we obtain that, for every $f \in K_{I}^{2} \cap z H^{2},\left\langle f, \varphi_{2}\right\rangle_{2}=0$. That means that $\varphi_{2} \in K_{I}^{2} \ominus\left(K_{I}^{2} \ominus z H^{2}\right)$, and as previously pointed out $K_{I}^{2} \ominus\left(K_{I}^{2} \cap z H^{2}\right)=\mathbb{C} k_{0}^{I}$, which means $\varphi_{2}=c(1-\overline{I(0)} I)$ for some $c \in \mathbb{C}$. Hence
$$
b=g+\overline{I \varphi_{1}}+\overline{c(1-\overline{I(0)} I)}=g+\bar{c}+\overline{I\left(\varphi_{1}-c \overline{I(0)}\right)}.
$$
We then deduce that $b \in L^{\infty} \cap\left(H^{2}+\overline{I H^{2}}\right)=Q .$



Now given $l \in (K_I^1 \cap z H^1)^* $, the Hahn-Banach Theorem shows there is a unique ${v + Q}$ where $ v \in L^{\infty}$ such that $l(f) =l_{v + Q} (f) =  \int_{\mathbb{T}} f({\zeta}) {v( \zeta)} \hskip 0.1cm dm(\zeta)$ and furthermore that $\|l_{v+Q} \| = \|v + Q \|_{L^{\infty}/Q}$. We refer the reader to page 97 Theorem 4.9 of \cite{rudinsfunanal} for details.


\end{proof}

One can also obtain a set theoretic description of $(K_I^1 \cap z H^1)^*$ which more so resembles the classical duality result for model spaces, which is $(K_I^p)^* =  K_I^q$  where $1 < p < \infty$ and $\frac{1}{p} + \frac{1}{q} = 1$ (see Theorem 5.10.1 in \cite{cima2000backward}).

We let $P_{I,z}$ denote the orthogonal projection $L^2 \to K_I^2 \cap z H^2 $ and we let $E:L^{\infty}/Q \to P_{I,z}(L^{\infty})$ be the well defined map $v + Q \mapsto P_{I,z}( \overline{v})$. If we equip $P_{I,z}(L^{\infty})$ with the norm $\| P_{I,z}(f) \| := \inf \{ \|g \|_{L^{\infty}} : P_{I,z}(g) = P_{I,z}(f) \}$ then $E$ is an isometric antilinear isomorphism. Now as $P_{I,z}P = P_{I,z}$ and $P(L^{\infty}) =  \text{BMOA}$ (see Theorem 3.5.11 in \cite{cima2000backward}), we may view $E$ as a map from $L^{\infty}/ Q $ to $P_{I,z}(\text{BMOA})$. Furthermore, we can write $P_{I,z}( \text{BMOA})$ as 
$$
\{ k \in K_I^2 \cap z H^2 :  \exists h \in H^2 \text{ and } \exists c \in K_I^2   \ominus (K_I^2 \cap z H^2) \text{ with } c+  k + Ih \in \text{BMOA} \} := K.
$$
Noting that, as in the proof of Theorem \ref{BMOA1dual}, we have that $K_I^2   \ominus (K_I^2 \cap z H^2) = \mathbb{C} k_0^I \subseteq H^{\infty} \subseteq \text{BMOA}$, this allows us to further write $K$ as 
$$
\{ k \in K_I^2 \cap z H^2 :  \exists h \in H^2  \text{ with }   k + Ih \in \text{BMOA} \}.
$$

As mentioned previously, because the space $H^1$ is invariant by multiplication by $I$, and as BMOA is the dual space of $H^1$ (see Theorem 2.2 in Chapter 9 of \cite{topicsincomlpexanalysis}), we can deduce that BMOA is invariant under the Toeplitz operator $T_{\overline{I}}$. So, if $k \in K$ is such that $k + I h = b \in \text{BMOA}$ with $h \in H^2$, then $T_{\overline{I}}( k + I h ) = T_{\overline{I}}(b)$, and so as $k \in \ker T_{\overline{I}}$, we have $ h = T_{\overline{I}}(b) \in \text{BMOA}$. Thus, we can in fact write $K$ as
$$
\{ k \in K_I^2\cap z H^2 :  \exists h \in \text{BMOA}\text{ with }   k + Ih \in \text{BMOA} \}.
$$
The above description of $K$ is clearly equal to $\text{span}(\text{BMOA}, I (\text{BMOA}) ) \cap K_I^2 \cap z H^2$, so we conclude
$$
P_{I, z}( L^{\infty}) = \text{span}(\text{BMOA}, I (\text{BMOA})) \cap K_I^2\cap z H^2 .
$$

Thus via $E$ and the above expression for $P_{I, z}( L^{\infty})$ we can isometrically identify $(K_I^1 \cap z H^1)^*$ as $\text{span}(\text{BMOA}, I (\text{BMOA})) \cap K_I^2\cap z H^2 $, where for $k \in K$ we have  $\| k\| = \inf \{ \|g \|_{L^{\infty}} : P_{I,z}(g) = k \}$. 

\begin{rem}
We remark that adapting the above theorem and reasoning we can similarly deduce $ (K_I^1)^* = P_{I}( \text{BMOA}) = \text{span}(\text{BMOA}, I (\text{BMOA})) \cap K_I^2 $ where for $k \in \text{span}(\text{BMOA}, I (\text{BMOA})) \cap K_I^2 $ we have  $\| k\| = \inf \{ \|g \|_{L^{\infty}} : P_{I}(g) = k \}$. 

We note that in general $I \text{BMOA} \not\subseteq \text{BMOA}$. In fact, the conditions for when $I \text{BMOA} \subseteq \text{BMOA}$ can be found as Theorem 1 in \cite{multiplybmoa}.
\end{rem}

If one wants to consider the predual of the space $K_I^1 \cap z H^1$, we have the following result which may be found as Lemma 3.1 in \cite{bessenovfredholm}. 

\begin{thm}\label{baranovpredual}
Let $\mathcal{F}_I$ be the closure of the set $\overline{I H^{\infty}} + H^{\infty}$ in the weak $*$ topology of the space $L^{\infty}$. Then $(C ( \mathbb{T}) / (\mathcal{F}_I \cap C ( \mathbb{T}) ))^* = K_I^1 \cap z H^1  $.
\end{thm}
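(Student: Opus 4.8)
The plan is to realise the statement as an instance of the standard isometric quotient duality $(X/Y)^{*}\cong Y^{\perp}$. Taking $X=C(\mathbb{T})$, so that $X^{*}=M(\mathbb{T})$ (the finite complex Borel measures, by the Riesz representation theorem) and $Y=\mathcal{F}_I\cap C(\mathbb{T})$ (a closed subspace, since $\mathcal{F}_I$ is weak-$*$, hence norm, closed), this yields $(C(\mathbb{T})/(\mathcal{F}_I\cap C(\mathbb{T})))^{*}\cong(\mathcal{F}_I\cap C(\mathbb{T}))^{\perp}$ isometrically, the annihilator being taken in $M(\mathbb{T})$. The theorem then reduces to identifying this annihilator, via $f\mapsto f\,dm$, with $K_I^1\cap zH^1$ equipped with the $L^1$ norm; since the total variation of $f\,dm$ equals $\|f\|_{L^1}$, the isometric statement will follow automatically once the two sets are shown to coincide.

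The easy inclusion is a direct computation. For $f\in K_I^1\cap zH^1$ I would check that the functional $g\mapsto\int_{\mathbb{T}}fg\,dm$ annihilates all of $\mathcal{F}_I$: writing $f=zf_0$ with $f_0\in H^1$ gives $\int f h\,dm=0$ for every $h\in H^{\infty}$, while writing $f=I\overline{w}$ with $w\in zH^1$ and using $|I|=1$ gives $\int f\,\overline{I\psi}\,dm=\overline{\int w\psi\,dm}=0$ for every $\psi\in H^{\infty}$. Hence the functional kills $H^{\infty}+\overline{IH^{\infty}}$, and because $f\in L^1$ renders it weak-$*$ continuous, it kills the weak-$*$ closure $\mathcal{F}_I$, in particular $\mathcal{F}_I\cap C(\mathbb{T})$. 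Read at the level of the $L^1$--$L^{\infty}$ pairing (testing against the bounded functions $z^k$ and $\overline{Iz^k}$), the very same computation in fact gives the exact identity ${}^{\perp}\mathcal{F}_I=K_I^1\cap zH^1$; this clean ``$L^1$-version'' of the result is what must be upgraded to the continuous setting.

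For the hard inclusion, take $\mu\perp\mathcal{F}_I\cap C(\mathbb{T})$. Since the disc algebra $A=H^{\infty}\cap C(\mathbb{T})\subseteq\mathcal{F}_I\cap C(\mathbb{T})$, we have $\int z^k\,d\mu=0$ for all $k\geqslant0$, so the F.\ and M.\ Riesz theorem forces $\mu$ to be absolutely continuous, $\mu=f\,dm$ with $f\in zH^1$. It remains to promote this to $f\perp\overline{IH^{\infty}}$, which by the Fourier computation of the previous paragraph yields $f\in I\overline{H^1_0}$ and hence $f\in K_I^1\cap zH^1$. Since $f$ already annihilates $H^{\infty}$, I may modify any test function by an element of $H^{\infty}$ without changing the pairing; thus it suffices to approximate each generator $\overline{Iz^k}$ weak-$*$, against the fixed $f\in L^1$, by continuous functions lying in $\mathcal{F}_I$. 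Equivalently, the task is to show that $\mathcal{F}_I\cap C(\mathbb{T})$ is weak-$*$ dense in $\mathcal{F}_I$, i.e.\ that for $L^1$ functionals annihilating the continuous members of $\mathcal{F}_I$ is no weaker than annihilating all of $\mathcal{F}_I$.

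This weak-$*$ density is the crux and, I expect, the only genuinely hard point. For a general inner $I$ the generators $\overline{Iz^k}$ are not continuous and cannot be made continuous merely by adding an $H^{\infty}$ function, as that would require $\overline{I}\in H^{\infty}+C(\mathbb{T})$, which fails for typical singular or infinite-Blaschke inner functions; moreover Fej\'er or Poisson means of elements of $\mathcal{F}_I$ need not remain inside $\mathcal{F}_I$, so naive smoothing is unavailable. The route I would pursue is to approximate on the \emph{cofactor} while preserving divisibility by $I$: choosing $\psi_n\in A$ with $\psi_n\to1$ weak-$*$ and $\psi_n$ vanishing suitably on the spectrum of $I$, the products $I\psi_n$ remain divisible by $I$ and become continuous, so that $\overline{I\psi_n z^k}\in\mathcal{F}_I\cap C(\mathbb{T})$ converges weak-$*$ to $\overline{Iz^k}$; for inner functions with larger boundary spectrum one must instead manufacture the approximants from genuine cancellations $h+\overline{Ig}\in C(\mathbb{T})$ between an analytic and a co-analytic part, which is the technical heart of the argument carried out in \cite{bessenovfredholm}. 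Once this density is established the hard inclusion closes, giving $(\mathcal{F}_I\cap C(\mathbb{T}))^{\perp}=K_I^1\cap zH^1$, and the isometry is then free from the quotient duality set up at the start; everything outside the density step is routine.
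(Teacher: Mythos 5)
A preliminary remark: the paper does not prove this statement at all --- it is quoted as Lemma 3.1 of \cite{bessenovfredholm} --- so there is no internal proof to compare against, and your attempt must stand on its own. Its skeleton is the standard and correct one: the isometric identification of $(C(\mathbb{T})/(\mathcal{F}_I\cap C(\mathbb{T})))^*$ with the annihilator $(\mathcal{F}_I\cap C(\mathbb{T}))^{\perp}$ inside $M(\mathbb{T})$, the easy inclusion (an $L^1$ function defines a weak-$*$ continuous functional on $L^{\infty}$, so killing $H^{\infty}+\overline{IH^{\infty}}$ forces killing its weak-$*$ closure $\mathcal{F}_I$), and the F.\ and M.\ Riesz step (the disc algebra lies in $\mathcal{F}_I\cap C(\mathbb{T})$, so any annihilating measure is $f\,dm$ with $f\in zH^1$). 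All of this is correct, including the observation that the isometry comes for free once the two sets are shown to coincide.

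The genuine gap is the step you yourself flag as the crux: that annihilating $\mathcal{F}_I\cap C(\mathbb{T})$ forces annihilating the generators $\overline{Iz^k}$, i.e.\ that $\mathcal{F}_I\cap C(\mathbb{T})$ is weak-$*$ dense in $\mathcal{F}_I$. This is not a detachable technicality that can be outsourced: since $\mathcal{F}_I$ coincides with the annihilator $Q=(K_I^1\cap zH^1)^{\perp}$ appearing in Theorem~\ref{BMOA1dual}, the bipolar theorem shows that this density claim is \emph{equivalent} to the theorem being proved, so deferring it to \cite{bessenovfredholm} defers the entire content --- and is circular, given that the cited lemma is precisely the statement under proof. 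Moreover, the one concrete construction you offer --- disc-algebra functions $\psi_n\to 1$ weak-$*$ vanishing on the boundary spectrum of $I$, so that $\overline{I\psi_n z^k}\in\mathcal{F}_I\cap C(\mathbb{T})$ --- works only when that spectrum has Lebesgue measure zero. If it has positive measure (e.g.\ a Blaschke product whose zeros accumulate on a set of positive measure, a case the theorem still covers), then every $H^{\infty}$ function vanishing on it is identically zero, so your approximants collapse and no argument remains. In short: correct reduction and correct routine steps, but the heart of the proof is missing.
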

Specifically, in the above theorem there is an isometric isomorphism given by $h \mapsto l_h$ where $h \in K_I^1 \cap z H^1$ and $l_h \in (C ( \mathbb{T}) / (\mathcal{F}_I \cap C ( \mathbb{T}) ))^*$ is defined by $l_h ( \phi )  = \int_{\mathbb{T}} \phi h \hskip 0.1cm dm $ where $\phi $ is any representative of an element in $(C ( \mathbb{T}) / (\mathcal{F}_I \cap C ( \mathbb{T}) ))$.

\section{Application to truncated Toeplitz operators}

In this section after giving a brief overview on some of the previously known results about the symbols of truncated Toeplitz operators, use our duality results from our previous section to deduce the main result of this paper, which is Theorem \ref{XX}.

The question of whether every bounded truncated Toeplitz operator has a bounded symbol is an interesting one. This question has led to much research activity within the community with many questions being answered and many new questions being posed. Here we give the reader a brief background on this topic. In Sarason's seminal work of 2007 \cite{sarason2007algebraic} he initiated a systematic study of truncated Toeplitz operators with symbols in $L^2$. In this paper, one of the most natural questions posed was whether every bounded truncated Toeplitz operator has a bounded symbol. This question was then shown to be negative in 2009 (see \cite{baranov2010bounded}). In fact, the authors actually constructed a bounded rank one truncated Toeplitz operator which was shown to have no bounded symbol. To build on this work, in \cite{baranov2010symbols} the authors gave two conditions on the inner function, $I$, which are equivalent to every bounded truncated Toeplitz operator on $K_I^2$ having a bounded symbol. (See Theorem \ref{3.2} below.)

Motivated by these findings, a similar study in to the symbols of compact truncated Toeplitz operators was initiated. Section 5 of \cite{Isabellesurvey} gives an overview of many results in this area. In particular, the role played by bounded symbols in the case of bounded truncated Toeplitz operators seems to be replaced by symbols of the form $I C( \mathbb{T})$ when we are considering compact truncated Toeplitz operators. Specifically, Proposition 5.4 of \cite{Isabellesurvey} shows that if $\phi \in I C(\mathbb{T})$ then $A_{\phi}^I$ is compact, however using the example of the rank one truncated Toeplitz operator which does not possess a bounded symbol, we know there exists a compact truncated Toeplitz operator without a symbol in $I C ( \mathbb{T})$, as trivially $I C ( \mathbb{T}) \subseteq L^{\infty}$. One question posed in \cite{Isabellesurvey} was whether there was a compact truncated Toeplitz operator with a symbol in $I C(\mathbb{T}) + I H^{\infty}$ that has no continuous symbol. This question was then answered affirmatively in \cite{pamJRP} when a compact truncated Toeplitz operator with this property was then constructed. 

Following the results in \cite{baranov2010symbols} one may suspect that there are conditions on the inner function $I$ which are equivalent to every compact truncated Toeplitz operator on $K_I^2$ having a symbol in $I C( \mathbb{T})$. We may further suspect that these conditions may be similar in nature to the conditions on the inner function $I$ which are equivalent to every bounded truncated Toeplitz operator on $K_I^2$ having a bounded symbol. In fact, in this section we will prove that every compact truncated Toeplitz operator on $K_I^2$ has a symbol in $I C( \mathbb{T})$ if and only if every bounded truncated Toeplitz operator on $K_I^2$ has a bounded symbol. We show this with Theorem \ref{XX} below.

In the following we define $\mathcal{C}_p (I)$ to be the set of all finite complex Borel measures $\mu$ on the unit circle such that the embedding $K_I^p \to L^p (|\mu | )$ is continuous.

\begin{thm}\label{3.2} \cite{baranov2010symbols}
\newline
The following are equivalent:
\begin{enumerate}
    \item any bounded truncated Toeplitz operator on $K_I^2$ admits a bounded symbol;
    \item $\mathcal{C}_1 (I^2) = \mathcal{C}_2 (I^2)$;
    \item for any $f \in H^1 \cap I^2 \overline{z H^1_0}$ there exists $x_i, y_i \in K_I^2$ with $ \sum_i \|x_i \|_2 \|y_i\|_2 < \infty$ such that $f= \sum_i x_i y_i$.
\end{enumerate}
\end{thm}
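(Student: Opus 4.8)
The plan is to prove the two equivalences $(1)\Leftrightarrow(3)$ and $(2)\Leftrightarrow(3)$ separately; the first is a soft duality argument built on the results of Section~2, while the second rests on Carleson embedding estimates. The organising principle is to realise $\mathcal{T}(I)$ as a dual space. Using the conjugation $C$ on $K_I^2$, given on $\mathbb{T}$ by $Cg=I\bar z\bar g$, I would attach to each $A\in\mathcal{T}(I)$ the bilinear form $\beta_A(x,y)=\langle Ax,Cy\rangle$ on $K_I^2\times K_I^2$; since $C$ is an isometric involution, $\|\beta_A\|=\|A\|$. The decisive computation is that for a symbol $\phi$ one has $\langle A_\phi x,Cy\rangle=\int_{\mathbb{T}}(\phi\bar I z)\,xy\,dm$, so $\beta_A$ depends only on the product $xy$ and hence factors through the multiplication map $x\otimes y\mapsto xy$; conversely, Sarason's description of $\mathcal{T}(I)$ in \cite{sarason2007algebraic} shows every functional arising this way is a truncated Toeplitz operator. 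A direct unwinding of the definition of $K_I^2$ shows $xy\in H^1\cap I^2\overline{zH^1_0}=:X$ for $x,y\in K_I^2$, so this identifies $\mathcal{T}(I)$ isometrically with the dual of the weak product space $K_I^2\odot K_I^2$, consisting of $f=\sum_i x_iy_i$ with $\|f\|_\odot=\inf\sum_i\|x_i\|_2\|y_i\|_2<\infty$, a subspace of $X$.

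With this identification $(1)\Leftrightarrow(3)$ becomes functional analysis. The functional on $K_I^2\odot K_I^2$ corresponding to $A_\phi$ is integration against $\psi=\phi\bar I z$, so $A$ admits a bounded symbol precisely when its functional extends to an $L^1$-bounded functional on $X$, i.e. is given by integration against some $v\in L^\infty$. Equivalently, after the isometry $f\mapsto zf$ that carries $X$ onto $K_{I^2}^1\cap zH^1$ (the shift used in the proof of Lemma~\ref{dense}), Theorem~\ref{BMOA1dual} applied to the inner function $I^2$ identifies the bounded symbols modulo the Sarason kernel $L^\infty\cap(IH^2+\overline{IH^2})$ with $(K_{I^2}^1\cap zH^1)^*=L^\infty/Q$, $Q=L^\infty\cap(H^2+\overline{I^2H^2})$, the point being that multiplication by $\bar I$ carries the kernel onto $Q$. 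Thus $(1)$ asserts exactly that the restriction map $j^*\colon (L^1)^*\to(K_I^2\odot K_I^2)^*=\mathcal{T}(I)$ is surjective, where $j\colon(K_I^2\odot K_I^2,\|\cdot\|_\odot)\to L^1$ is the inclusion. First I would record, by an argument parallel to Lemma~\ref{dense}, that $K_I^2\odot K_I^2$ is dense in $X$ for the $L^1$-norm; then $j$ is injective with dense range, so by the closed range theorem $j^*$ is onto if and only if $j$ is bounded below, that is $\|f\|_\odot\lesssim\|f\|_1$ on $K_I^2\odot K_I^2$. Since $X$ is $L^1$-closed, this bound together with density says $K_I^2\odot K_I^2=X$, which is precisely the weak factorisation in $(3)$; the converse passage from the bare existence of a factorisation in $(3)$ to the quantitative bound is an open mapping argument.

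For $(2)\Leftrightarrow(3)$ I would use the factorisation to transfer Carleson estimates between exponents. If $\mu\in\mathcal{C}_2(I^2)$ then, since $K_I^2\subseteq K_{I^2}^2$, it controls $\int|x|^2\,d|\mu|$ for $x\in K_I^2$; Cauchy--Schwarz gives $\int|xy|\,d|\mu|\le\|x\|_{L^2(\mu)}\|y\|_{L^2(\mu)}$, and summing over a factorisation of $f\in X$ from $(3)$ yields $\int|f|\,d|\mu|\lesssim\|f\|_\odot\lesssim\|f\|_1$, so $\mathcal{C}_2(I^2)\subseteq\mathcal{C}_1(I^2)$ (the one-dimensional gap between $K_{I^2}^1$ and $K_{I^2}^1\cap zH^1$ being harmless, as it is absorbed by the bounded evaluation at $0$). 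In the reverse direction, for $x\in K_I^2$ one checks $x^2\in X\subseteq K_{I^2}^1$ with $\|x^2\|_1=\|x\|_2^2$, so a measure in $\mathcal{C}_1(I^2)$ at least controls the embedding of $K_I^2$ into $L^2(\mu)$; promoting this to the full class $\mathcal{C}_2(I^2)$, and extracting the quantitative factorisation $(3)$ from the mere coincidence of Carleson classes in $(2)$, is the crux. This reverse Carleson inclusion cannot be reached by the soft manipulations above --- one cannot factor $|g|^2$ for $g\in K_{I^2}^2$ through an analytic product --- and it is exactly here that the embedding theory for model spaces (reproducing-kernel estimates and reverse Carleson embeddings) must be invoked. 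I therefore expect this reverse inclusion to be the main obstacle, with the duality of the first two paragraphs supplying the rest.
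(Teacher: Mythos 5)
First, a point of order: the paper you were given never proves Theorem \ref{3.2} at all --- it is imported verbatim from \cite{baranov2010symbols}, and everything in Section 3 takes it as a black box. So your attempt can only be measured against the proof in that cited source. Measured that way, your duality half is sound and is essentially the route taken there: the form $\beta_A(x,y)=\langle Ax,Cy\rangle$ with $Cg=I\bar z\bar g$, the computation $\langle A_\phi x,Cy\rangle=\int(\phi\bar Iz)xy\,dm$, and the identification of $\mathcal{T}(I)$ with the dual of the weak product space is exactly the content of the paper's Theorem \ref{dualTTO}; converting surjectivity of $j^*$ into a lower bound for $j$ (closed range theorem) and then into statement (3) via $L^1$-density is the right mechanism. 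The density you postpone is genuinely available with the tools at hand: if $v\in L^\infty$ annihilates all products $xy$ with $x,y\in K_I^2$, then $A_{vI\bar z}=0$, so Sarason's zero-symbol theorem gives $vI\bar z\in IH^2+\overline{IH^2}$, hence $\bar zv\in Q_2=L^\infty\cap(H^2+\overline{I^2H^2})$, which annihilates all of $K_{I^2}^1\cap zH^1$ by the annihilator computation inside Theorem \ref{BMOA1dual}; a Hahn--Banach argument as in Lemma \ref{dense} then closes the loop. Your direction $(3)\Rightarrow(2)$ is likewise correct, granted the known (nontrivial) identity $\mathcal{C}_2(I)=\mathcal{C}_2(I^2)$, which the paper itself quotes from \cite{Isabellesurvey}.

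The genuine gap is the one you yourself flag: you never prove $(2)\Rightarrow(3)$ (equivalently $(2)\Rightarrow(1)$), deferring it to unspecified ``embedding theory.'' This is not a finishing touch; it is the deepest implication of the theorem, and it cannot be reached by the functional-analytic scaffolding of your first two paragraphs, precisely because the hypothesis $\mathcal{C}_1(I^2)=\mathcal{C}_2(I^2)$ is a statement about moduli $|f|$, $|g|^2$ and carries no a priori link to the bilinear structure that duality sees. In \cite{baranov2010symbols} this implication is proved by a substantially different and harder argument, manufacturing the weak factorization of an arbitrary $f\in H^1\cap I^2\overline{zH^1_0}$ out of the Carleson-class hypothesis via Aleksandrov--Clark measure techniques for model-space embeddings. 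So as written, your proposal establishes $(1)\Leftrightarrow(3)$ and $(3)\Rightarrow(2)$, i.e.\ two-thirds of the theorem, and correctly isolates --- but does not supply --- the remaining third.
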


The inner function $I$ is said to be one-component if and only if there exists an $\eta \in (0,1)$ such that
$$
\{ z \in \mathbb{D} : | I(z) | < \eta \}
$$
is connected. We remark that by Corollary 2.5 in \cite{baranov2010symbols} the equivalent conditions of the theorem below are fulfilled when $I$ is a one component inner function.

\vskip 0.4cm
\textbf{Throughout the rest of the paper we will focus on proving the following theorem.}

\vskip 0.2cm

\begin{thm}\label{XX}
The equivalent conditions of Theorem \ref{3.2} are satisfied if and only if any compact truncated Toeplitz operator on $K_I^2$ has a symbol in $I C ( \mathbb{T})$.
\end{thm}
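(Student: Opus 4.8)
### The proof strategy

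The plan is to prove Theorem~\ref{XX} by establishing both directions through a duality argument that links the symbol-existence problem to the functional-analytic descriptions of $(K_I^1 \cap z H^1)^*$ developed in Section~2. The key observation is that truncated Toeplitz operators are naturally paired with functions via the bilinear form
$$
A_\phi \longmapsto \langle A_\phi k_\lambda, k_\mu \rangle,
$$
and that the space of symbols for $A_\phi$ is governed by the annihilator $(K_I^1 \cap z H^1)^{\perp} = Q$ computed in Theorem~\ref{BMOA1dual}. In Sarason's framework the symbol $\phi \in L^2$ of a truncated Toeplitz operator is determined only modulo $H^2 + \overline{I H^2}$, which is precisely the subspace appearing in the definition of $Q$; this is the structural reason we expect the Section~2 duality to be the right tool.

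First I would recall the identification, implicit in the literature on truncated Toeplitz operators, between bounded (respectively compact) truncated Toeplitz operators and bounded (respectively suitably-vanishing) linear functionals on the relevant predual. Concretely, I would use the predual description in Theorem~\ref{baranovpredual}: since $K_I^1 \cap z H^1 = (C(\mathbb{T})/(\mathcal{F}_I \cap C(\mathbb{T})))^*$, a bounded truncated Toeplitz operator corresponds to a bounded functional on $K_I^1 \cap z H^1$ (equivalently an element of its dual, which Theorem~\ref{BMOA1dual} realises as $L^\infty/Q$), while a compact truncated Toeplitz operator should correspond to a functional that lies in the canonical image of the predual $C(\mathbb{T})/(\mathcal{F}_I \cap C(\mathbb{T}))$ itself. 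The heuristic is: "bounded symbol" $\leftrightarrow$ "the functional is represented by an $L^\infty$ symbol," whereas "symbol in $IC(\mathbb{T})$" $\leftrightarrow$ "the functional is represented by a continuous symbol," and continuity is exactly what distinguishes the predual from the full dual.

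For the direction assuming the conditions of Theorem~\ref{3.2} hold, I would take a compact truncated Toeplitz operator $A_\phi$, use the boundedness-implies-bounded-symbol hypothesis together with the factorisation in condition~(3) to write the associated functional on $H^1 \cap I^2 \overline{z H^1_0}$ as a convergent sum of products from $K_I^2$, and then upgrade the bounded symbol to a continuous one by exploiting compactness: compactness of $A_\phi$ forces the corresponding functional to annihilate $\mathcal{F}_I$, which via the predual pairing of Theorem~\ref{baranovpredual} yields a representative in $C(\mathbb{T})$, and pulling back through multiplication by $I$ gives a symbol in $IC(\mathbb{T})$. For the converse, I would argue contrapositively: if some bounded truncated Toeplitz operator has no bounded symbol, then by condition~(2) there is a measure $\mu \in \mathcal{C}_2(I^2) \setminus \mathcal{C}_1(I^2)$, and I would manufacture from it a compact truncated Toeplitz operator (the rank-one or finite-rank construction from \cite{baranov2010bounded} is the natural model) whose functional cannot be represented by any element of $IC(\mathbb{T})$, again detecting the obstruction through the $L^\infty/Q$ versus $C(\mathbb{T})/(\mathcal{F}_I \cap C(\mathbb{T}))$ distinction.

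The main obstacle I anticipate is the bookkeeping in translating between the three equivalent forms of the duality — the quotient $L^\infty/Q$, the analytic-function model $\operatorname{span}(\text{BMOA}, I\,\text{BMOA}) \cap K_I^2 \cap z H^2$, and the predual $C(\mathbb{T})/(\mathcal{F}_I \cap C(\mathbb{T}))$ — in a way that keeps track of precisely which functionals arise from \emph{compact} operators and precisely which symbols are \emph{continuous}. The delicate point is showing that the compactness of $A_\phi$ is equivalent to the representing functional lying in the (norm-closed) image of the predual rather than merely in the dual; this is where the weak-$*$ closure defining $\mathcal{F}_I$ and the role of $C(\mathbb{T})$ become essential, and I expect the argument to hinge on identifying $IC(\mathbb{T})$ as exactly the symbols whose induced functionals factor continuously through $C(\mathbb{T})/(\mathcal{F}_I \cap C(\mathbb{T}))$. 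Making this factorisation rigorous — rather than merely plausible — is the crux of the proof.
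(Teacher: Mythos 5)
Your proposal has a genuine gap, and it sits exactly where the theorem's difficulty lies. For the backward implication (every compact truncated Toeplitz operator has a symbol in $IC(\mathbb{T})$ $\Rightarrow$ every bounded one has a bounded symbol), you argue contrapositively: from a bounded truncated Toeplitz operator without bounded symbol, extract via condition (2) a measure $\mu\in\mathcal{C}_2(I^2)\setminus\mathcal{C}_1(I^2)$, and then ``manufacture'' a compact truncated Toeplitz operator with no symbol in $IC(\mathbb{T})$. That manufacturing step \emph{is} the theorem (in contrapositive form), and you give no construction: the rank-one example of \cite{baranov2010bounded} that you cite as the natural model is built for one particular inner function, and nothing in your sketch explains how an arbitrary measure witnessing $\mathcal{C}_1(I^2)\neq\mathcal{C}_2(I^2)$ produces a compact operator whose entire symbol class avoids $IC(\mathbb{T})$. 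There is a second, subtler circularity: your starting identification of bounded truncated Toeplitz operators with functionals on $K_{I^2}^1\cap zH^1$ is not available a priori. The dualities one actually has are $X^*=\mathcal{T}(I)$ and $(\mathcal{T}_c(I))^*=X$, where $X$ is the space of sums $\sum x_i\overline{y_i}$; the bridge from $X$ to $K_{I^2}^1\cap zH^1$ is the multiplication map $L(f)=If$, and $L$ being an isomorphism is precisely condition (3) of Theorem \ref{3.2} --- again the thing being proved.

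The paper's proof constructs nothing and instead runs a soft functional-analytic transfer through this map $L$. One shows (Theorem \ref{Lstar}, Corollary \ref{boundedimage}) that the adjoint $L^*:L^\infty/Q_2\to\mathcal{T}(I)$ is $g+Q_2\mapsto A_{Ig}$, with image \emph{exactly} the bounded truncated Toeplitz operators possessing a bounded symbol; and (Propositions \ref{starS}, \ref{propcompact}) that $L$ has a genuine pre-adjoint ${}^*L:C(\mathbb{T})/(\mathcal{F}_{I^2}\cap C(\mathbb{T}))\to\mathcal{T}_c(I)$, $g\mapsto A_{Ig}$, injective and bounded, with image \emph{exactly} the operators with symbol in $IC(\mathbb{T})$. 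The hypothesis of the backward direction says ${}^*L$ is surjective, hence an isomorphism by the open mapping theorem; adjoints of isomorphisms are isomorphisms, so $({}^*L)^*=L$ is an isomorphism and then so is $L^*$; and surjectivity of $L^*$ is verbatim the conclusion. This intertwining map, whose image can be computed at both the compact/continuous level and the bounded level, is the missing idea in your proposal --- without it, the ``bookkeeping'' between $L^\infty/Q$ and $C(\mathbb{T})/(\mathcal{F}\cap C(\mathbb{T}))$ that you rightly identify as delicate has no map to act on. Two lesser points: the forward implication is already known (Theorem 5.2 of \cite{Isabellesurvey}, as the paper notes), and your proposed mechanism for it (``compactness of $A_\phi$ forces the functional to annihilate $\mathcal{F}_I$'') is not an argument, since an operator in $\mathcal{T}(I)$ is a functional on $X$, not on $K_{I^2}^1\cap zH^1$; and your indices are systematically off --- because $Ix\overline{y}\in K_{I^2}^1\cap zH^1$ for $x,y\in K_I^2$, the relevant spaces are $K_{I^2}^1\cap zH^1$, $Q_2=L^\infty\cap(H^2+\overline{I^2H^2})$ and $\mathcal{F}_{I^2}$, not their counterparts for $I$.
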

\vskip 0.4cm
We note that the forward implication of the above theorem has already been noticed. Indeed, as noted in Section 3.2 of \cite{Isabellesurvey}, we have $\mathcal{C}_2 (I^2) = \mathcal{C}_2 (I)$, and thus in view of Theorem \ref{3.2}, Theorem 5.2 in \cite{Isabellesurvey} states if every bounded truncated Toeplitz operator on $K_I^2$ has a bounded symbol then every compact truncated Toeplitz operator on $K_I^2$ is of the form $A_{I \phi}$ where $\phi \in C ( \mathbb{T})$.

The main theorem of this paper is the backwards implication of the above theorem, that is, if any compact truncated Toeplitz operator on $K_I^2$ has a symbol in $I C ( \mathbb{T})$, then the equivalent conditions of Theorem \ref{3.2} are satisfied. As the workings for the proof of the backwards implication of Theorem \ref{XX} are quite long and detailed we first outline the idea of the proof for the reader.

We first consider the map $L: X \to  K_{I^2}^1 \cap z H^1,$ given by $f \mapsto  I f $ (where $X$ is defined as in \eqref{Xdefn} below). Then using the results of Section 1 as well as Theorems \ref{cptduality} and \ref{dualTTO}, which identify the predual and dual space of $X$ as $\mathcal{T}_c(I)$ and $\mathcal{T}(I)$ respectively, we can describe the pre-adjoint, $^*L$, and adjoint, $L^*$, of $L$,
\begin{align*}
    ^*L:& C ( \mathbb{T}) / (\mathcal{F}_{I^2} \cap C ( \mathbb{T}) ) \to \mathcal{T}_c  (I), \quad &&g+\mathcal{F}_{I^2} \cap C ( \mathbb{T})  \mapsto A_{I f }^I \\
    L:& X \to z H^1 \cap K_{I^2}^1, \quad &&f \mapsto  I f \\
    L^*:& L^{\infty} / Q_2 \to \mathcal{T}  (I), \quad &&g+Q_2 \mapsto A_{I f}^I,
\end{align*}
where $Q_2  = L^{\infty} \cap (H^2 + \overline{I^2 H^2}) $. The above description of $L^*$ is given by Theorem \ref{Lstar} and the description of $^*L$ is given by Proposition \ref{starS}.

Then for an inner function $I$, by Proposition \ref{propcompact} we notice that any compact truncated Toeplitz operator on $K_I^2$ having a symbol in $I C ( \mathbb{T})$, is actually equivalent to $^*L$ being isomorphic. Then when $^*L$ is isomorphic, $L^*$ must also be isomorphic and in particular the image of $L^*$ must be equal to $\mathcal{T}(I)$. Then under the assumption $^*L$ is surjective, by Corollary \ref{boundedimage} (which shows the image of $L^*$ is all truncated Toeplitz operators on $K_I^2$ with a bounded symbol) we are able to deduce that every bounded truncated Toeplitz operator on $K_I^2$ has a bounded symbol. 
\vskip 0.6cm
Following the results of \cite{baranov2010symbols}, we define the Banach spaces
\begin{equation}\label{Xdefn}
    X = \{ \sum x_i \overline{y_i} : x_i , y_i \in K_I^2 , \sum \|x_i\|_2 \|y_i \|_2  < \infty \}, 
\end{equation}
and
$$
X_a = \{ \sum x_i y_i : x_i , y_i \in K_I^2 , \sum \|x_i\|_2 \|y_i \|_2  < \infty \}. 
$$
The norm in the space of $X$ and $X_a$ is defined as the infimum of $ \sum \|x_i\|_2 \|y_i \|_2$ over all possible representations. We note there is an isometric isomorphism from $X$ to $X_a$ given by 
\begin{equation}\label{XtoXa}
    f \mapsto \overline{z} I f,
\end{equation}
and one can also show that the forward shift from $X_a \to K_{I^2}^1 \cap z H^1$ is bounded. The following theorem can be found as Theorem 2.3 in \cite{baranov2010symbols}.

\begin{thm}\label{dualTTO}
The dual space of $X$ can be naturally identified with $\mathcal{T}(I)$. Namely, continuous linear functionals over $X$ are of the form 
$$
\Phi_A (f) = \sum_i \langle A x_i , y_i \rangle , \hskip 2cm f = \sum_i x_i \overline{y_i} \in X,
$$
with $A \in \mathcal{T}(I)$, and the correspondence between $X$ and $\mathcal{T}(I)$ is one to one and isometric.

\end{thm}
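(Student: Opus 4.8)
The plan is to realise $X$ as a quotient of a projective tensor product and then compute its dual by identifying the annihilator of the relevant subspace with $\mathcal{T}(I)$. First I would set $\mathcal{P} := K_I^2 \widehat{\otimes} \overline{K_I^2}$, the projective tensor product, and let $q \colon \mathcal{P} \to L^1$ be the bounded map induced by $x \otimes \overline{y} \mapsto x \overline{y}$ (pointwise product), whose kernel $N := \ker q$ is closed. Because the norm on $X$ is the infimum of $\sum \|x_i\|_2 \|y_i\|_2$ over all representations, $q$ induces an isometric isomorphism $\mathcal{P}/N \cong X$. The standard duality for projective tensor products gives $\mathcal{P}^* \cong \mathcal{B}(K_I^2)$ isometrically: a bounded functional on $\mathcal{P}$ is a bounded bilinear form on $K_I^2 \times \overline{K_I^2}$, hence, being linear in $x$ and conjugate-linear in $y$, has by Riesz representation the form $x \otimes \overline{y} \mapsto \langle T x, y\rangle =: \Phi_T(x \otimes \overline{y})$ for a unique $T \in \mathcal{B}(K_I^2)$ with $\|\Phi_T\| = \|T\|$. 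Combining this with the quotient--annihilator duality $(\mathcal{P}/N)^* \cong N^{\perp}$, I obtain an isometric identification of $X^*$ with $N^{\perp} = \{ T \in \mathcal{B}(K_I^2) : \sum_i \langle T x_i, y_i\rangle = 0 \text{ whenever } \sum_i x_i \overline{y_i} = 0 \}$. The whole theorem then reduces to the single statement $N^{\perp} = \mathcal{T}(I)$; once this is known, injectivity of $A \mapsto \Phi_A$ and the isometry $\|\Phi_A\| = \|A\|$ follow immediately by testing on simple tensors $f = x \overline{y}$, for which $\Phi_A(f) = \langle A x, y\rangle$ while $\|f\|_X \leq \|x\|_2 \|y\|_2$.

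The forward inclusion $\mathcal{T}(I) \subseteq N^{\perp}$ is the assertion that $\Phi_A$ is well defined, i.e. that $\sum_i \langle A x_i, y_i\rangle$ depends only on $\sum_i x_i \overline{y_i}$. When $A = A_{\psi}$ has a bounded symbol this is a direct computation: since $\psi \in L^{\infty}$ one may interchange sum and integral to get $\sum_i \langle A_{\psi} x_i, y_i\rangle = \int_{\mathbb{T}} \psi ( \sum_i x_i \overline{y_i} ) \, dm$, which manifestly depends only on $f$. The genuine difficulty is that a bounded truncated Toeplitz operator need only possess an $L^2$ symbol, for which this interchange is unavailable. I would overcome this either by appealing to Sarason's structural description of truncated Toeplitz operators \cite{sarason2007algebraic}, or by showing that operators with bounded symbol are dense in $\mathcal{T}(I)$ in the relevant weak-$*$ topology and that $N^{\perp}$, being a dual space, is weak-$*$ closed; here it is helpful to note that $\mathcal{P}$ is, up to the natural conjugation, the trace class $\mathcal{S}_1(K_I^2)$, so this weak-$*$ topology is the usual ultraweak topology.

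The reverse inclusion $N^{\perp} \subseteq \mathcal{T}(I)$ is the heart of the argument and I expect it to be the main obstacle. Given $T \in N^{\perp}$, the form $\langle T x, y\rangle$ descends to a $\| \cdot \|_X$-bounded functional on $X \subseteq L^1$, and one must show that $T$ is a truncated Toeplitz operator. It is tempting to extend this functional by Hahn--Banach to a bounded functional on $L^{\infty}$ and thereby manufacture a bounded symbol; this fails precisely because $\| \cdot \|_X$ dominates the $L^1$ norm, so the functional need not be $L^1$-bounded, and this very failure is the phenomenon that keeps the bounded-symbol question open (an operator in $N^{\perp}$ with only an $L^2$ symbol is not excluded). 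Instead I would invoke Sarason's characterisation: $T \in \mathcal{T}(I)$ if and only if the defect $T - A_z^* T A_z$ is an operator of rank at most two of the special form built from the reproducing kernel $k_0^I$ \cite{sarason2007algebraic}. The technical core is to show that an operator respecting every relation $\sum_i x_i \overline{y_i} = 0$ satisfies exactly this defect condition, which amounts to understanding which combinations $\sum_i x_i \overline{y_i}$ vanish and feeding them into $\langle T \cdot, \cdot\rangle$. Once this equivalence is established, the chain of isometric identifications above delivers $X^* \cong \mathcal{T}(I)$ with the pairing $\Phi_A(f) = \sum_i \langle A x_i, y_i\rangle$, and one-to-one-ness together with the isometry are then automatic.
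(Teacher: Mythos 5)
You should know at the outset that the paper does not prove this statement at all: it is quoted as Theorem 2.3 of \cite{baranov2010symbols}, so your proposal has to be measured against the proof given there. Your skeleton is in fact the same as that proof's: realise $X$ isometrically as $\mathcal{S}_1(K_I^2)/N$ (the conjugate-linear trace-class picture of $K_I^2 \widehat{\otimes} \overline{K_I^2}$), identify $X^* \cong N^{\perp} \subseteq \mathcal{B}(K_I^2)$, and reduce everything to the single equality $N^{\perp} = \mathcal{T}(I)$; that reduction, and the bookkeeping that injectivity and the isometry then come for free, are correct. Your inclusion $N^{\perp} \subseteq \mathcal{T}(I)$ can also be completed along the lines you indicate: for $x, y \in K_I^2 \cap zH^2$ one has $A_z^* x = \overline{z}x$ and $A_z^* y = \overline{z}y$, so $(A_z^*x)\overline{(A_z^*y)} = x\overline{y}$, i.e. $x \otimes y - A_z^*x \otimes A_z^*y \in N$; hence any $T \in N^{\perp}$ satisfies $\langle (T - A_zTA_z^*)x, y \rangle = 0$ for all $x, y \perp k_0^I$, which forces $T - A_zTA_z^* = \psi \otimes k_0^I + k_0^I \otimes \chi$, and Sarason's criterion \cite{sarason2007algebraic} applies. (Small correction: the defect that pairs with $k_0^I$ is $T - A_zTA_z^*$, not $T - A_z^*TA_z$ as you wrote; the latter version involves the conjugate kernel $\overline{z}(I - I(0))$.)

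The genuine gap is the opposite inclusion $\mathcal{T}(I) \subseteq N^{\perp}$, i.e. the well-definedness of $\Phi_A$ when $A$ is bounded but has only an $L^2$ symbol; this is exactly the technical heart of \cite{baranov2010symbols}, and neither of your proposed routes closes it. Route (b), weak-$*$ density of the bounded-symbol operators in $\mathcal{T}(I)$: this is not available beforehand — once the theorem is proved it \emph{follows} from it (the pre-annihilator in $X$ of the bounded-symbol operators is $\{0\}$), and you give no independent argument, so as it stands this route is circular. Route (a), "appealing to Sarason's structural description", fails if applied naively: writing $A = A_zAA_z^* + R$ with $R = \psi \otimes k_0^I + k_0^I \otimes \chi$, the rank-two piece $R$ is in general \emph{not} in $N^{\perp}$. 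Concretely, for $I = z^3$ the identity operator $\mathrm{Id} = A_1$ has defect $R = \mathrm{Id} - A_zA_z^* = 1 \otimes 1$, and $T = z \otimes z - 1 \otimes 1 \in N$, yet $\Phi_R(T) = -1 \neq 0$; one checks instead that $\Phi_{\mathrm{Id}}(T) = \sum_{j \geq 0} \Phi_R(A_z^{*j}TA_z^{j}) = -1 + 1 + 0 + \cdots = 0$, so membership of $A$ in $N^{\perp}$ comes from cancellation \emph{across} the series obtained by iterating the defect relation, not termwise. Proving that this series always vanishes when $q(T) = 0$ amounts to an interchange-of-summation argument involving the functions $\sum_i \overline{\hat{x}_i(j)}\, A_z^{*j}y_i$ whose absolute convergence is precisely what fails for unbounded symbols; supplying that argument (or some substitute, as in \cite{baranov2010symbols}) is the missing core of the proof, and until it is supplied your chain of isometric identifications does not yield the theorem.
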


We define a bounded linear map
$$L: X \to  K_{I^2}^1 \cap z H^1,$$
given by 
$$
f \mapsto  I f .
$$
Now taking into account the results of Section 1 and Theorem \ref{dualTTO}, when considering the adjoint, $L^*$, of $L$ we obtain a bounded map
$$
L^* : L^{\infty} / Q_2 \to \mathcal{T}(I),
$$
where $Q_2  = L^{\infty} \cap (H^2 + \overline{I^2 H^2}) $ Explicitly, $L^*$ is the unique map satisfying
\begin{equation}\label{equate}
    \langle L(k) , g + Q_2 \rangle = \langle k , L^*(g + Q_2) \rangle,
\end{equation}
for each $k \in X$ and $g + Q_2 \in  L^{\infty} / Q_2$ (here the duality pairings, denoted $\langle \hskip 0.1cm, \rangle $, are given by Theorem \ref{BMOA1dual} and \ref{dualTTO} respectively). If we denote $A$ to be the truncated Toeplitz operator $L^*(g + Q_2)$, then for $k = x\overline{y}$ with $x \in K_I^{\infty}, y \in K_I^2$ equating both sides of equation \eqref{equate} gives us 
$$
\int_{\mathbb{T}} x \overline{y} I g \hskip 0.2cm dm = \int_{\mathbb{T}} A(x) \overline{y} \hskip 0.2cm dm ,
$$
and so as $y \in K_I^2$, and by orthogonality we know $\langle f, y \rangle_{2} = \langle P_{I}(f), y \rangle_{2} $ for each $f \in L^2$ we have
$$
\int_{\mathbb{T}} (A(x) - x I g) \overline{y} \hskip 0.2cm dm  = \int_{\mathbb{T}} (A(x) - P_{I}(x I g)) \overline{y} \hskip 0.2cm dm = 0.
$$
Now by density of $K_I^{\infty}$ in $K_I^2$ (see Section 2.5 of \cite{recentprogressGarcia}) and non-degeneracy of the integral we can deduce
$$
A = A_{I g}^I.
$$
We conclude the following result;

\begin{thm}\label{Lstar}
There is a bounded linear map $ L^*: L^{\infty} / Q_2 \to \mathcal{T}(I)$, given by 
\begin{equation}\label{bounded}
    g + Q_2 \mapsto A_{Ig}.
\end{equation}
\end{thm}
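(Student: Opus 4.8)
The plan is to obtain $L^*$ as the ordinary Banach-space adjoint of the bounded operator $L\colon X\to K_{I^2}^1\cap zH^1$, so that the existence, linearity and boundedness asserted in the statement (in fact $\|L^*\|=\|L\|$) are automatic; the only genuine content is then the explicit formula \eqref{bounded} together with the correct identification of the two dual spaces. First I would confirm that $L$ is well defined and bounded into $K_{I^2}^1\cap zH^1$: the isometric isomorphism $X\to X_a$ of \eqref{XtoXa} sends $f$ to $\overline z If$, and composing it with the forward shift $X_a\to K_{I^2}^1\cap zH^1$ (already noted to be bounded) realises $f\mapsto z(\overline z If)=If$, exhibiting $L$ as a composition of two bounded maps.

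Next I would pin down the two duals. Applying Theorem \ref{BMOA1dual} with $I$ replaced by $I^2$ identifies $(K_{I^2}^1\cap zH^1)^*$ with $L^\infty/Q_2$, where $Q_2=L^\infty\cap(H^2+\overline{I^2H^2})$, under the pairing $l_{v+Q_2}(F)=\int_{\mathbb T}Fv\,dm$; Theorem \ref{dualTTO} identifies $X^*$ with $\mathcal T(I)$ under the pairing $\Phi_A\!\left(\sum_i x_i\overline{y_i}\right)=\sum_i\langle Ax_i,y_i\rangle$. With these two identifications in place, $L^*$ is exactly the unique map $L^\infty/Q_2\to\mathcal T(I)$ satisfying the adjoint relation \eqref{equate}, and it remains only to read off what it does.

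To compute the formula I would test \eqref{equate} on the elementary elements $k=x\overline y$ with $x\in K_I^\infty$ and $y\in K_I^2$. Setting $A:=L^*(g+Q_2)$, the left-hand pairing gives $\int_{\mathbb T}(Ix\overline y)\,g\,dm=\int_{\mathbb T}x\overline y\,Ig\,dm$ while the right-hand pairing gives $\langle Ax,y\rangle=\int_{\mathbb T}A(x)\overline y\,dm$. Since $y\in K_I^2$ and $P_I$ is the orthogonal projection onto $K_I^2$, I may replace $xIg$ by $P_I(xIg)$ in the integral against $\overline y$, so that $\int_{\mathbb T}\big(A(x)-P_I(xIg)\big)\overline y\,dm=0$ for every $y\in K_I^2$; non-degeneracy forces $A(x)=P_I(xIg)=A_{Ig}(x)$ for all $x\in K_I^\infty$, and density of $K_I^\infty$ in $K_I^2$ then gives $A=A_{Ig}$. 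Because $Ig\in L^\infty$ the operator $A_{Ig}$ has a bounded symbol, hence lies in $\mathcal T(I)$, and its independence of the representative $g$ is automatic from the adjoint construction (equivalently $A_{Iq}=0$ whenever $q\in Q_2$, by the analogue for $I^2$ of the $\supseteq$ computation in the proof of Theorem \ref{BMOA1dual}).

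I expect the main obstacle to be the bookkeeping that keeps the two duality pairings aligned rather than any hard analysis. The functional on $K_{I^2}^1\cap zH^1$ is the bilinear integral $\int_{\mathbb T}Fv\,dm$, carrying no conjugation, whereas the functional on $X$ is $\sum_i\langle Ax_i,y_i\rangle$, which does carry a conjugate on each $y_i$. The reconciliation is that $L(k)=Ix\overline y$ already contains the factor $\overline y$, so the two conventions match with no spurious conjugate appearing on the symbol; checking this compatibility carefully, and checking that it is the integral pairing of Theorem \ref{BMOA1dual} (not the sesquilinear $H^2$ inner product) that governs the $K^1$-side, is the only step demanding real care.
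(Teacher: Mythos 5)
Your proposal is correct and follows essentially the same route as the paper: realise $L^*$ as the Banach-space adjoint of $L$ via the identifications $(K_{I^2}^1\cap zH^1)^*\cong L^\infty/Q_2$ (Theorem \ref{BMOA1dual} with $I^2$) and $X^*\cong\mathcal{T}(I)$ (Theorem \ref{dualTTO}), then test the adjoint relation \eqref{equate} on elements $x\overline{y}$ with $x\in K_I^\infty$, $y\in K_I^2$, use orthogonality to insert $P_I$, and conclude $A=A_{Ig}$ by density of $K_I^\infty$ in $K_I^2$ and non-degeneracy. Your additional remarks on the bilinear-versus-sesquilinear pairing conventions and on well-definedness modulo $Q_2$ are careful touches the paper leaves implicit, but they do not change the argument.
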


\begin{cor}\label{boundedimage}
The image of $L^*$ is exactly all elements of $\mathcal{T}(I)$ which possess a bounded symbol.
\end{cor}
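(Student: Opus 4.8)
The plan is to unwind the definition of $L^*$ from Theorem \ref{Lstar} and observe that its image is precisely the set $\{A_{Ig} : g \in L^{\infty}\}$, then match this set with the operators admitting a bounded symbol. One inclusion is immediate: for every $g \in L^{\infty}$ the function $Ig$ again lies in $L^{\infty}$ (as $I$ is unimodular on $\mathbb{T}$), and $A_{Ig}$ is by definition the truncated Toeplitz operator with the bounded symbol $Ig$. Hence every element of the image of $L^*$ admits a bounded symbol, so $\operatorname{Im}(L^*)$ is contained in the set of elements of $\mathcal{T}(I)$ possessing a bounded symbol.

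For the reverse inclusion I would take an arbitrary $A \in \mathcal{T}(I)$ that admits a bounded symbol, say $A = A_{\psi}$ with $\psi \in L^{\infty}$, and exhibit it as $A_{Ig}$ for a suitable $g \in L^{\infty}$. The key point is that multiplication by $I$ is a bijection of $L^{\infty}$ onto itself, with inverse multiplication by $\overline{I}$, because $|I| = 1$ a.e. on $\mathbb{T}$. Thus setting $g := \overline{I}\psi$ gives $g \in L^{\infty}$ and $Ig = |I|^2 \psi = \psi$ a.e.\ on $\mathbb{T}$, so that $A_{Ig} = A_{\psi} = A$ lies in the image of $L^*$. Equivalently, as $g$ ranges over $L^{\infty}$ the product $Ig$ ranges over all of $L^{\infty}$, whence $\{A_{Ig} : g \in L^{\infty}\}$ coincides with the set of all truncated Toeplitz operators on $K_I^2$ admitting a bounded symbol.

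There is essentially no analytic obstacle here; the content is purely formal, resting on the unimodularity of $I$. The only point requiring a little care is conceptual rather than computational: one must remember that symbols of truncated Toeplitz operators are not unique, so ``possesses a bounded symbol'' is a property of the operator $A$ and not of any chosen representative, and the assertion concerns the image of $L^*$ as a set of operators in $\mathcal{T}(I)$. Consequently it suffices to produce a single bounded $g$ with $A_{Ig} = A$, which the substitution $g = \overline{I}\psi$ achieves at once, without invoking the kernel characterisation of the symbol map (namely that $A_{\varphi} = 0$ precisely when $\varphi \in I H^2 + \overline{I H^2}$) that underlies the well-definedness of $L^*$ and the quotient by $Q_2$.
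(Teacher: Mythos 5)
Your proof is correct and takes essentially the same route as the paper: the forward inclusion is immediate since $Ig \in L^{\infty}$, and the paper's converse step is exactly your substitution, taking a bounded symbol $\psi$ of $A$ and noting that $\overline{I}\psi \in L^{\infty}$, so that $\overline{I}\psi + Q_2$ maps to $A$ under $L^*$.
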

\begin{proof}
Clearly the image of $L^*$ is contained the set of all bounded truncated Toeplitz operators with a bounded symbol. Conversely, if $A_{\phi}^I$ has a bounded symbol $g$ then $\overline{I} g\in L^{\infty}$, so $\overline{I} g + Q_2 \in L^{\infty} / Q_2$ and maps to $A_{\phi}^I$ through $L^*$.
\end{proof}
In the case when every bounded truncated Toeplitz operator has a bounded symbol the map $L^*$ is isomorphic, and so the norm of $A_{Ig}$ is equivalent to $\| g + Q_2 \|_{L^{\infty}/Q_2}.$

We now seek to describe the pre-adjoint of $L$. The second part of Theorem 2.3 in \cite{baranov2010symbols} states the following.
\begin{thm}\label{cptduality}
The dual space of all compact truncated Toeplitz operators can be identified with $X$, via the duality pairing $\sum_i x_i \overline{y_i}  \mapsto L_{\sum_i x_i \overline{y_i} }$, where
$$
L_{\sum_i x_i \overline{y_i} } (A) = \sum_i \langle A x_i, y_i \rangle,
$$
for each compact truncated Toeplitz operator $A$. Furthermore the duality pairing $\sum_i x_i \overline{y_i}  \mapsto L_{\sum_i x_i \overline{y_i} }$ is one-to-one and isometric map between $X$ and $(\mathcal{T}_c(I))^*$.
\end{thm}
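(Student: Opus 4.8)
The plan is to realise everything inside the algebra $\mathcal{B}(K_I^2)$ of bounded operators on the Hilbert space $K_I^2$, writing $\mathcal{K}(K_I^2)$ and $\mathcal{S}_1(K_I^2)$ for the compact and trace-class operators, so that $\mathcal{K}(K_I^2)^* = \mathcal{S}_1(K_I^2)$ and $\mathcal{S}_1(K_I^2)^* = \mathcal{B}(K_I^2)$ under the trace pairing $(A,T) \mapsto \operatorname{tr}(AT)$. Under the assignment $\sum_i x_i \overline{y_i} \mapsto \sum_i x_i \otimes y_i$ the space $X$ is precisely the quotient $\mathcal{S}_1(K_I^2)/N$, where $N = {}_{\perp}\mathcal{T}(I)$ is the pre-annihilator of the bounded truncated Toeplitz operators inside $\mathcal{S}_1(K_I^2)$ and the projective norm on $X$ is exactly the quotient norm. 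With this identification Theorem~\ref{dualTTO} is the statement $X^* = N^{\perp} = \mathcal{T}(I)$ (using that $\mathcal{T}(I)$ is weak-$*$ closed in $\mathcal{B}(K_I^2)$), and the pairing $L_f(A) = \sum_i \langle A x_i, y_i \rangle$ is the trace pairing $\operatorname{tr}(A T_f)$ for any representative $T_f$ of $f$.

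With this dictionary in place, the first step is to check that $\Psi\colon f \mapsto L_f$ is a well-defined contraction $X \to \mathcal{T}_c(I)^*$. This is immediate from Theorem~\ref{dualTTO}: $L_f$ is nothing but the restriction to the subspace $\mathcal{T}_c(I) \subseteq \mathcal{T}(I)$ of the image of $f$ under the canonical isometric embedding $X \hookrightarrow X^{**} = \mathcal{T}(I)^*$, so $L_f$ is independent of the representation of $f$ and $\|L_f\|_{\mathcal{T}_c(I)^*} \leqslant \|f\|_X$. Surjectivity then follows cleanly: given $\psi \in \mathcal{T}_c(I)^*$, extend it by Hahn--Banach to $\mathcal{K}(K_I^2)$, and since $\mathcal{K}(K_I^2)^* = \mathcal{S}_1(K_I^2)$ there is $T \in \mathcal{S}_1(K_I^2)$ with $\psi(A) = \operatorname{tr}(AT)$ for every $A \in \mathcal{T}_c(I)$. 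Letting $f \in X = \mathcal{S}_1(K_I^2)/N$ be the class of $T$, any other representative $T'$ satisfies $T - T' \in N = {}_{\perp}\mathcal{T}(I)$, and since $\mathcal{T}_c(I) \subseteq \mathcal{T}(I)$ this gives $\operatorname{tr}(A(T-T')) = 0$ for all $A \in \mathcal{T}_c(I)$; hence $L_f(A) = \operatorname{tr}(AT) = \psi(A)$, so $\Psi$ is onto.

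The remaining, and genuinely hardest, point is that $\Psi$ is isometric, which also yields injectivity. Since $\|f\|_X = \sup\{|\operatorname{tr}(A T_f)| : A \in N^{\perp} = \mathcal{T}(I),\ \|A\| \leqslant 1\}$ by $X^* = \mathcal{T}(I)$, while $\|L_f\| = \sup\{|\operatorname{tr}(A T_f)| : A \in \mathcal{T}_c(I),\ \|A\| \leqslant 1\}$, equality of the two suprema amounts to showing that the unit ball of $\mathcal{T}_c(I)$ is weak-$*$ dense in the unit ball of $\mathcal{T}(I) = X^*$. The difficulty here is that the norming operators are constrained to be truncated Toeplitz operators, so one cannot simply truncate the partial isometry arising from the polar decomposition of $T_f$, as such a truncation need not be a TTO. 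The route I would take is Sarason's description of $\mathcal{T}(I)$ as the weak-$*$ closed linear span of the rank-one truncated Toeplitz operators $k_\lambda^I \otimes \widetilde{k_\lambda^I}$ and $\widetilde{k_\lambda^I} \otimes k_\lambda^I$; these are of finite rank, hence lie in $\mathcal{T}_c(I)$, so their span is already weak-$*$ dense in $\mathcal{T}(I)$, and a Krein--Smulian/Goldstine argument upgrades this to density at the level of unit balls. This last upgrade, controlling operator norms while approximating an arbitrary bounded truncated Toeplitz operator by compact ones, is exactly where the structure theory of $\mathcal{T}(I)$ is indispensable, and it is the main obstacle of the proof.
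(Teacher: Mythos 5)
First, a point of comparison: the paper itself gives no proof of Theorem \ref{cptduality} --- it is quoted verbatim as the second half of Theorem 2.3 of \cite{baranov2010symbols} --- so your attempt has to be judged on its own terms rather than against an argument in the text. Your trace-duality framework is sound once Theorem \ref{dualTTO} is granted: the map $q\colon \mathcal{S}_1(K_I^2)\to X$, $\sum_i x_i\otimes y_i\mapsto \sum_i x_i\overline{y_i}$, is a quotient map (the projective norm on $X$ is the quotient norm) whose kernel is exactly $N={}_{\perp}\mathcal{T}(I)$: well-definedness of the functionals $\Phi_A$ gives $\ker q\subseteq N$, and the fact that the $\Phi_A$ exhaust $X^*$, hence separate points of $X$, gives $N\subseteq\ker q$. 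So $X\cong\mathcal{S}_1/N$ isometrically, and your surjectivity argument via Hahn--Banach extension to $\mathcal{K}(K_I^2)$ is complete and correct.

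The gap sits exactly at the step you flag as the main obstacle, and it is twofold. (a) The weak-$*$ density of the span of the rank-one truncated Toeplitz operators in $\mathcal{T}(I)$ is not a theorem of Sarason: he proved that the operators $k^I_\lambda\otimes\tilde k^I_\lambda$ are truncated Toeplitz operators and that $\mathcal{T}(I)$ is WOT-closed, but the density statement is essentially equivalent to the well-definedness half of Theorem \ref{dualTTO} (the main lemma of \cite{baranov2010symbols}), so citing it as an external black box is dangerously close to assuming what is being proved. Within the logic of this paper you may instead derive it from Theorem \ref{dualTTO}, and that derivation is the one genuinely substantive step: if $T=\sum_i x_i\otimes y_i\in\mathcal{S}_1$ annihilates every \emph{compact} truncated Toeplitz operator, then in particular $0=\operatorname{tr}\bigl(T(k^I_\lambda\otimes\tilde k^I_\lambda)\bigr)=\langle Tk^I_\lambda,\tilde k^I_\lambda\rangle=\overline{\sum_i\tilde x_i(\lambda)y_i(\lambda)}$ for all $\lambda\in\mathbb{D}$, where $\tilde x_i=Cx_i=I\overline{zx_i}$; the function $\sum_i\tilde x_i y_i$ lies in $H^1$ and has boundary values $I\overline{z}\,\overline{q(T)}$, so $q(T)=0$, and then the well-definedness in Theorem \ref{dualTTO} forces $\operatorname{tr}(AT)=0$ for every \emph{bounded} truncated Toeplitz operator $A$; in other words ${}_{\perp}\mathcal{T}_c(I)=N$. (b) Your ``Krein--Smulian/Goldstine upgrade'' is not a general principle: a weak-$*$ dense subspace of a dual space need not be $1$-norming, so weak-$*$ density of $\mathcal{T}_c(I)$ in $\mathcal{T}(I)$ does not by itself give unit-ball density. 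What rescues the claim is not the structure theory of $\mathcal{T}(I)$ but the abstract fact that $\mathcal{T}_c(I)$ consists of compact operators: for any norm-closed subspace $M\subseteq\mathcal{K}(H)$ one has $M^*\cong\mathcal{S}_1/{}_{\perp}M$ and $M^{**}\cong({}_{\perp}M)^{\perp}$, the weak-$*$ closure of $M$, so Goldstine applies to $M$ itself. Better still, once (a) is in hand you need no ball density at all: Hahn--Banach gives $\mathcal{T}_c(I)^*\cong\mathcal{S}_1/{}_{\perp}\mathcal{T}_c(I)=\mathcal{S}_1/N\cong X$ isometrically, with the pairing being exactly $L_{\sum_i x_i\overline{y_i}}$, and that is the theorem. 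So your architecture is right, but the real content is the reproducing-kernel computation in (a), not the norm-control step you identified as the difficulty.
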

In general the pre-adjoint of a bounded linear map may not exist. Nonetheless we may define the map $^*L:C ( \mathbb{T}) / (\mathcal{F}_{I^2} \cap C ( \mathbb{T}) ) \to \mathcal{T}_c(I)$, where $$g + (\mathcal{F}_{I^2} \cap C ( \mathbb{T})) \mapsto A_{Ig}.$$ 
\begin{prop}\label{starS}
The map $^*L$ is a well defined bounded, linear, injective map and $(^*L)^* = L$ (i.e. $^*L$ is the pre-adjoint of $L$).
\end{prop}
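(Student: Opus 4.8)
The plan is to verify the four structural assertions in turn---linearity, that $^*L$ lands in $\mathcal{T}_c(I)$, well-definedness and injectivity (which I will obtain simultaneously from a single kernel computation)---and then deduce the adjoint relation $(^*L)^* = L$ from one duality-pairing identity. Linearity is immediate, and $^*L$ genuinely takes values in $\mathcal{T}_c(I)$ because $g \in C(\mathbb{T})$ forces $Ig \in IC(\mathbb{T})$, so $A_{Ig}$ is compact by Proposition 5.4 of \cite{Isabellesurvey}. The crux is to compute the kernel of the not-yet-descended map $g \mapsto A_{Ig}$ on $C(\mathbb{T})$ and show it equals exactly $\mathcal{F}_{I^2}\cap C(\mathbb{T})$: this one computation yields both well-definedness (the map descends to the quotient) and injectivity (the descended map has trivial kernel).

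For the kernel I would invoke Sarason's vanishing criterion from \cite{sarason2007algebraic}: for $\psi \in L^2$ one has $A_\psi = 0$ if and only if $\psi \in I H^2 + \overline{I H^2}$. Applying this to $\psi = Ig$ and multiplying through by $\overline{I}$ shows $A_{Ig} = 0$ if and only if $g \in H^2 + \overline{I^2 H^2}$, and since $g \in L^\infty$ this says precisely $g \in Q_2 = L^\infty \cap (H^2 + \overline{I^2 H^2})$. It then remains to identify $Q_2$ with $\mathcal{F}_{I^2}$. I would obtain this from the bipolar theorem in the duality $(L^1, L^\infty)$: since $\mathcal{F}_{I^2}$ is by definition the weak-$*$ closure of the subspace $\overline{I^2 H^\infty} + H^\infty$, it equals the annihilator in $L^\infty$ of the pre-annihilator in $L^1$ of that subspace. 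A short Fourier-coefficient computation identifies this pre-annihilator as $K_{I^2}^1 \cap z H^1$, and Theorem \ref{BMOA1dual} applied to the inner function $I^2$ identifies the annihilator of $K_{I^2}^1 \cap z H^1$ in $L^\infty$ as $Q_2$. Hence $\mathcal{F}_{I^2} = Q_2$, and the kernel of $g \mapsto A_{Ig}$ is exactly $\mathcal{F}_{I^2}\cap C(\mathbb{T})$.

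Next I would establish the pairing identity
$$
\langle L(k),\, \phi + (\mathcal{F}_{I^2}\cap C(\mathbb{T})) \rangle = \langle k,\, {}^*L(\phi + (\mathcal{F}_{I^2}\cap C(\mathbb{T}))) \rangle,
$$
where the left pairing is that of Theorem \ref{baranovpredual} (for $I^2$) and the right pairing is that of Theorem \ref{cptduality}. By absolute convergence of the representing series it suffices to check this on a rank-one element $k = x\overline{y}$ with $x,y \in K_I^2$. For such $k$ the left side equals $\int_{\mathbb{T}} \phi\, I x \overline{y}\, dm$, while the right side equals $\langle A_{I\phi}(x), y\rangle = \langle I\phi x, y\rangle = \int_{\mathbb{T}} I\phi x \overline{y}\, dm$, the middle step using $y \in K_I^2$ and self-adjointness of $P_I$ (note $I\phi \in L^\infty$, so $A_{I\phi}(x) = P_I(I\phi x)$ for every $x \in K_I^2$). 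The two expressions coincide, and termwise summation extends the identity to all of $X$.

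Finally I would use this identity to conclude. Boundedness follows by viewing ${}^*L(v) \in \mathcal{T}_c(I)$ inside its bidual $(\mathcal{T}_c(I))^{**} = X^*$ under the canonical isometric embedding: for $\|k\|_X \leq 1$ the identity gives $|\langle k, {}^*L(v)\rangle| = |\langle L(k), v\rangle| \leq \|L\|\,\|v\|$, whence $\|{}^*L(v)\|_{\mathcal{T}_c(I)} \leq \|L\|\,\|v\|$. With $^*L$ now bounded, its adjoint $(^*L)^*\colon X = (\mathcal{T}_c(I))^* \to K_{I^2}^1 \cap z H^1$ exists, and the pairing identity reads $\langle (^*L)^*(k), v\rangle = \langle L(k), v\rangle$ for all $k$ and all $v$; since $K_{I^2}^1 \cap z H^1$ is the dual of $C(\mathbb{T})/(\mathcal{F}_{I^2}\cap C(\mathbb{T}))$, elements are separated by such $v$, forcing $(^*L)^* = L$. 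The main obstacle throughout is the kernel identification, and within it the equality $\mathcal{F}_{I^2} = Q_2$; once that is in place the remaining steps are routine duality bookkeeping, the only delicate point being to keep the two distinct pairings and the canonical bidual embedding straight.
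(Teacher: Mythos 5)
Your proposal is correct, and its skeleton matches the paper's proof: Sarason's zero-symbol criterion from \cite{sarason2007algebraic} for the kernel, Proposition 5.4 of \cite{Isabellesurvey} for compactness of $A_{Ig}$, and the adjoint identity $({}^*L)^* = L$ concluded from the isometric isomorphism of Theorem \ref{baranovpredual}, exactly as in the paper. You diverge in three details, each defensible. First, you prove the identification $\mathcal{F}_{I^2} = Q_2$ explicitly, via the bipolar theorem together with the annihilator computation of Theorem \ref{BMOA1dual} applied to $I^2$; the paper compresses this into ``we see that the map is well defined and injective,'' even though the inclusion $Q_2 \cap C(\mathbb{T}) \subseteq \mathcal{F}_{I^2}$ is genuinely needed for injectivity, so your version fills a real gap in the exposition and is the strongest part of the proposal. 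Second, you get boundedness from the canonical isometric embedding $\mathcal{T}_c(I) \hookrightarrow (\mathcal{T}_c(I))^{**} = X^*$ and the pairing identity, yielding $\|{}^*L\| \leqslant \|L\|$, whereas the paper bounds $\|A_{Ig}\|$ by the infimum of $\|g+q\|_{L^\infty}$ over $q$ in the coset, which yields the sharper estimate $\|{}^*L\| \leqslant 1$ (contractivity); your bound suffices for the proposition as stated. Third, you verify the duality identity on rank-one elements $x\overline{y}$ and extend by absolute convergence of the representing series, avoiding the paper's appeal to Proposition 4.1 of \cite{baranov2010symbols} (the four-term decomposition of elements of $X$); this is a mild simplification, and your care with the two distinct pairings (Theorem \ref{baranovpredual} versus Theorem \ref{cptduality}) and with the fact that $A_{I\phi}(x) = P_I(I\phi x)$ extends to all $x \in K_I^2$ when $I\phi \in L^\infty$ is exactly what is needed to make the computation legitimate.
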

\begin{proof}
The map $^*L$ is clearly linear. Proposition 5.4 in \cite{Isabellesurvey} ensures that the specified $A_{Ig}$ is indeed a compact truncated Toeplitz operator. Noting that $A_{\phi}^I =0$ if and only if $\phi \in \overline{I H^2} + I H^2$ (see Section 3 of \cite{sarason2007algebraic}), we see that the map is well defined and injective. By non-uniqueness of the symbol we have that 
$$
\| A_{Ig} \| \leqslant \inf \{ \|\overline{I h_1} + Ig + I h_2 \|_{L^{\infty}} \} = \inf \{ \|\overline{I^2 h_1} + g +  h_2 \|_{L^{\infty}} \} 
$$
taken over all $\overline{I^2 h_1} + h_2 \in  (\mathcal{F}_{I^2} \cap C ( \mathbb{T}) )$. We see the above expression is actually equal to $\| g + (\mathcal{F}_{I^2} \cap C ( \mathbb{T} )) \|$, and thus $^*L$ is bounded

We now argue $(^*L)^* = L$. Proposition 4.1 in \cite{baranov2010symbols} states that every element of $X$ can be expressed as a sum of four elements of the form $x \overline{y}$ for $x, y \in K_I^2$. So for an arbitrary element $\sum_{i=1}^4 x_i \overline{y_i} \in X$, we know $(^*L)^*$ is the linear map satisfying 
$$
\langle ^*L (g + (\mathcal{F}_{I^2} \cap C ( \mathbb{T}) )),\sum_{i=1}^4 x_i \overline{y_i} \rangle = \langle g +(\mathcal{F}_{I^2} \cap C ( \mathbb{T}) ) ,(^*L)^*(\sum_{i=1}^4 x_i \overline{y_i}) \rangle ,
$$
for every $g + (\mathcal{F}_{I^2} \cap C ( \mathbb{T}) ) \in C ( \mathbb{T}) / (\mathcal{F}_{I^2} \cap C ( \mathbb{T}) )$ and every $\sum_{i=1}^4 x_i \overline{y_i} \in X$. Here the duality pairing on the left hand side is understood by the duality described in Theorem \ref{cptduality} and on the right hand side the duality is described by Theorem \ref{baranovpredual}. Explicitly, this means $(^*L)^*$ is a linear map satisfying
\begin{equation}\label{setghere}
\int_{\mathbb{T}} I g \sum_{i=1}^4 x_i \overline{y_i} \hskip 0.1cm dm =  \int_{\mathbb{T}} g (^*L)^* (\sum_{i=1}^4 x_i \overline{y_i}) \hskip 0.1cm dm ,
\end{equation}
i.e.
\begin{equation}\label{starnotes}
    \int_{\mathbb{T}}g \underbrace{\left( I \sum_{i=1}^4 x_i \overline{y_i} - (^*L)^* (\sum_{i=1}^4 x_i \overline{y_i}) \right) \hskip 0.1cm}_{:=u} dm  = 0
\end{equation}
for all $g + (\mathcal{F}_{I^2} \cap C ( \mathbb{T}) )  \in C ( \mathbb{T}) / (\mathcal{F}_{I^2} \cap C ( \mathbb{T}) )$. Now, $u$ defined as above lies in $K_{I^2}^1 \cap z H^1$, so by Theorem \ref{baranovpredual} the map $l_u$, where $l_u (g + (\mathcal{F}_{I^2} \cap C ( \mathbb{T}) )) \mapsto \int_{\mathbb{T}} g u \hskip 0.1cm dm $ defines a continuous linear functional on $ C ( \mathbb{T}) / (\mathcal{F}_{I^2} \cap C ( \mathbb{T}) )$. So when \eqref{starnotes} holds we must have $l_u$ is the zero functional, however by Theorem \ref{baranovpredual} we know $u \mapsto l_u$ is an isometric isomorphism and so we must have $u = 0$ and so $ (^*L)^* (\sum_{i=1}^4 x_i \overline{y_i})= I \sum_{i=1}^4 x_i \overline{y_i}$ and therefore $ (^*L)^* = L$.

\end{proof}
We can make a result which is analogous to Corollary \ref{boundedimage} but in the case of continuous symbols.
\begin{prop}\label{propcompact}
The image of $^*L$ is all truncated Toeplitz operators of the form $A_{\phi}$ where $\phi \in I C ( \mathbb{T})$
\end{prop}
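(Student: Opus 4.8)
The plan is to show the two inclusions separately, leveraging the explicit formula for $^*L$ established just above, namely $g + (\mathcal{F}_{I^2} \cap C(\mathbb{T})) \mapsto A_{Ig}$. For the easier inclusion, that every element of the image has the claimed form, I would simply observe that if $g \in C(\mathbb{T})$ is any representative, then $A_{Ig}$ has symbol $Ig \in I C(\mathbb{T})$ by construction, so the image is certainly contained in $\{ A_\phi : \phi \in I C(\mathbb{T}) \}$.

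For the reverse inclusion, I would start with an arbitrary $\phi \in I C(\mathbb{T})$ and write $\phi = I g$ for some $g \in C(\mathbb{T})$. The point is that $A_\phi = A_{Ig}$ is then exactly $^*L(g + (\mathcal{F}_{I^2} \cap C(\mathbb{T})))$, so $A_\phi$ lies in the image of $^*L$. The only thing requiring care is that $A_{Ig}$ is genuinely a compact truncated Toeplitz operator so that it really lives in the codomain $\mathcal{T}_c(I)$; this is precisely what Proposition 5.4 of \cite{Isabellesurvey} guarantees and what was already invoked in the proof of Proposition \ref{starS}. So the surjectivity onto the target set is essentially immediate from the defining formula.

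I expect the main subtlety, if any, to be purely bookkeeping: making sure that every $\phi \in I C(\mathbb{T})$ arises from a bona fide representative $g$ in the quotient $C(\mathbb{T}) / (\mathcal{F}_{I^2} \cap C(\mathbb{T}))$, and that the well-definedness already checked in Proposition \ref{starS} (via $A_\phi^I = 0$ iff $\phi \in \overline{I H^2} + I H^2$) is consistent with the image description. Concretely, if $\phi = Ig_1 = Ig_2$ with $g_1, g_2 \in C(\mathbb{T})$, then $g_1 - g_2 \in \overline{I^2 H^2} + H^2$, which upon intersecting with $C(\mathbb{T})$ lands in $\mathcal{F}_{I^2} \cap C(\mathbb{T})$, so the preimage is a single coset and no ambiguity arises. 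There is no real analytic obstacle here; the proposition is a direct unwinding of the formula for $^*L$ together with the compactness statement from \cite{Isabellesurvey}, and its role in the larger argument is to set up the equivalence (via Proposition \ref{propcompact}) between ``every compact truncated Toeplitz operator has a symbol in $I C(\mathbb{T})$'' and ``$^*L$ is surjective,'' which then feeds into the duality argument with $L^*$.
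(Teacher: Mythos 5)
Your proposal is correct and takes essentially the same approach as the paper, which states this proposition without proof precisely because it is the immediate unwinding of the formula $g + (\mathcal{F}_{I^2} \cap C(\mathbb{T})) \mapsto A_{Ig}$ together with the compactness guarantee of Proposition 5.4 of \cite{Isabellesurvey}, exactly as you argue. Your final bookkeeping point is already subsumed by the well-definedness and injectivity check in Proposition \ref{starS} (and is in fact simpler than you suggest: since $|I|=1$ a.e.\ on $\mathbb{T}$, $Ig_1 = Ig_2$ forces $g_1 = g_2$ outright).
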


As the map $L$ is clearly injective, $^*L$ must have dense range and we can make the following corollary which has been previously noticed in the proof of Lemma 3.5 in \cite{bessenovfredholm}.

\begin{cor}
Truncated Toeplitz operators of the form $A_{\phi}$ where $\phi \in I C ( \mathbb{T})$ are dense in $\mathcal{T}_c(I)$.
\end{cor}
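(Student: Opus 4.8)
The plan is to reduce the corollary to a single functional-analytic principle. Proposition \ref{propcompact} has already identified the range of $^*L$ with exactly the set $\{A_\phi : \phi \in IC(\mathbb{T})\}$, so the corollary is equivalent to the assertion that $^*L$ has dense range in $\mathcal{T}_c(I)$. I would therefore aim to prove density of $\operatorname{ran}({}^*L)$ and then invoke Proposition \ref{propcompact} to translate this back into the stated claim about symbols in $IC(\mathbb{T})$.

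First I would recall the standard duality between range density and adjoint injectivity: for a bounded linear map $T \colon Y \to Z$ of Banach spaces one has $(\operatorname{ran} T)^{\perp} = \ker T^{*}$, and consequently $\overline{\operatorname{ran} T} = {}^{\perp}(\ker T^{*})$ by the Hahn--Banach characterisation of the closure recalled at the start of Section 2. In particular $\operatorname{ran} T$ is dense in $Z$ precisely when $T^{*}$ is injective. I would apply this with $T = {}^*L \colon C(\mathbb{T})/(\mathcal{F}_{I^2} \cap C(\mathbb{T})) \to \mathcal{T}_c(I)$, whose boundedness is guaranteed by Proposition \ref{starS}; here the relevant duals are $(\mathcal{T}_c(I))^{*} = X$ from Theorem \ref{cptduality} and $(C(\mathbb{T})/(\mathcal{F}_{I^2}\cap C(\mathbb{T})))^{*} = K_{I^2}^1 \cap zH^1$ from Theorem \ref{baranovpredual}. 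Thus density of $\operatorname{ran}({}^*L)$ in $\mathcal{T}_c(I)$ is equivalent to injectivity of the adjoint $({}^*L)^{*} \colon X \to K_{I^2}^1 \cap zH^1$.

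To close the argument I would use Proposition \ref{starS}, which computes $({}^*L)^{*} = L$, where $L \colon X \to K_{I^2}^1 \cap zH^1$ is $f \mapsto If$. This map is injective: every element of $X$ lies in $L^1$, and multiplication by the inner function $I$ is injective on $L^1$ since $|I| = 1$ almost everywhere on $\mathbb{T}$, so $If = 0$ forces $f = 0$. Injectivity of $L = ({}^*L)^{*}$ therefore yields, via the principle above, that $^*L$ has dense range in $\mathcal{T}_c(I)$. Combining this with the identification of $\operatorname{ran}({}^*L)$ in Proposition \ref{propcompact} gives exactly that the operators $A_\phi$ with $\phi \in IC(\mathbb{T})$ are dense in $\mathcal{T}_c(I)$.

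There is no substantive obstacle here, since all the heavy lifting (the dual identifications, the computation $({}^*L)^{*}=L$, and the description of $\operatorname{ran}({}^*L)$) is already available. The only point requiring genuine care is the bookkeeping of the duality: one must check that $({}^*L)^{*}$ is genuinely the Banach-space adjoint of $^*L$ with respect to the pairings of Theorems \ref{cptduality} and \ref{baranovpredual}, so that the range-density/adjoint-injectivity equivalence applies to the correct map. This orientation is precisely what Proposition \ref{starS} establishes, so I would simply cite it rather than reprove it.
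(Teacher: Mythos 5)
Your proposal is correct and follows exactly the paper's route: the paper also deduces the corollary from the injectivity of $L$ (noted there as ``clearly injective,'' which you justify via $|I|=1$ a.e.), the identity $({}^*L)^{*}=L$ from Proposition \ref{starS}, the standard equivalence between dense range and adjoint injectivity, and the description of $\operatorname{ran}({}^*L)$ in Proposition \ref{propcompact}. You have merely made explicit the duality bookkeeping that the paper leaves implicit.
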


We now can prove our main result, which is a one way implication of Theorem \ref{XX}.
\begin{thm}\label{2}
If every compact truncated Toeplitz operator on $K_I^2$ is of the form $A_{I \phi}$ where $\phi \in C ( \mathbb{T})$ then every bounded truncated Toeplitz operator has a bounded symbol.
\end{thm}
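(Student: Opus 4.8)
The plan is to translate the hypothesis into a surjectivity statement about $^*L$, propagate this through the duality chain to obtain surjectivity of $L^*$, and then read off the conclusion from Corollary \ref{boundedimage}.

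First I would observe that, by Proposition \ref{propcompact}, the image of $^*L$ is precisely the set of truncated Toeplitz operators of the form $A_{\phi}$ with $\phi \in I C(\mathbb{T})$. Hence the hypothesis that every compact truncated Toeplitz operator on $K_I^2$ is of the form $A_{I\phi}$ with $\phi \in C(\mathbb{T})$ is exactly the assertion that $^*L$ is surjective onto $\mathcal{T}_c(I)$. By Proposition \ref{starS} the map $^*L$ is bounded and injective, and its domain $C(\mathbb{T})/(\mathcal{F}_{I^2}\cap C(\mathbb{T}))$ and codomain $\mathcal{T}_c(I)$ are Banach spaces, so the open mapping theorem (bounded inverse theorem) upgrades $^*L$ to a Banach space isomorphism.

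Next I would pass to adjoints. Since the adjoint of an isomorphism is again an isomorphism, and $(^*L)^* = L$ by Proposition \ref{starS}, the map $L\colon X \to K_{I^2}^1 \cap z H^1$ is an isomorphism; applying this principle once more, $L^* = (^*L)^{**}\colon L^{\infty}/Q_2 \to \mathcal{T}(I)$ is an isomorphism, and in particular surjective. Equivalently, one can argue directly via the standard duality dictionary: $^*L$ surjective forces $(^*L)^* = L$ to be bounded below, which in turn forces its adjoint $L^*$ to be surjective. Finally, Corollary \ref{boundedimage} identifies the image of $L^*$ with the collection of all bounded truncated Toeplitz operators possessing a bounded symbol; since this image is now all of $\mathcal{T}(I)$, every bounded truncated Toeplitz operator on $K_I^2$ has a bounded symbol, which is the desired conclusion.

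The routine part is the conversion of the hypothesis into surjectivity of $^*L$ and the closing appeal to Corollary \ref{boundedimage}. The step requiring the most care, and the main obstacle, is the duality bookkeeping: one must verify that under the identifications furnished by Theorems \ref{cptduality}, \ref{dualTTO}, \ref{baranovpredual} and \ref{BMOA1dual} (the last applied with $I^2$ in place of $I$, so that $Q_2 = L^{\infty} \cap (H^2 + \overline{I^2 H^2})$), the second adjoint $(^*L)^{**}$ genuinely coincides with the map $L^*$ of Theorem \ref{Lstar}, and that the relevant subspaces are closed so that all the quotients are genuine Banach spaces and the open mapping and adjoint-isomorphism theorems are applicable.
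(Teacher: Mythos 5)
Your proof is correct and follows essentially the same route as the paper's: Proposition \ref{propcompact} converts the hypothesis into surjectivity (hence, with injectivity from Proposition \ref{starS}, invertibility) of $^*L$, adjoints then make $L$ and $L^*$ isomorphisms, and Corollary \ref{boundedimage} yields the conclusion. Your added care about the Banach-space structure of the quotients and the identification $L^* = (^*L)^{**}$ is a sensible elaboration of steps the paper leaves implicit, not a departure from its argument.
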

\begin{proof}
If every compact truncated Toeplitz operator on $K_I^2$ is of the form $A_{I \phi}$ where $\phi \in C ( \mathbb{T})$ then by Proposition \ref{propcompact} we know that $^*L$ is surjective (and hence isomorphic). Now by Proposition \ref{starS} we must also have that $(^*L)^* = L$ is isomorphic, and hence $L^*$ is isomorphic. Now by Corollary \ref{boundedimage} this must mean every bounded truncated Toeplitz operator on $K_I^2$ has a bounded symbol.
\end{proof}

\begin{rem}
Examining the above proof we can obtain a concise alternate proof of a result already known from Theorem \ref{3.2}, which is $L$ is isomorphic (or equivalently condition 3 holds in Theorem \ref{3.2}) implies every bounded truncated Toeplitz operator on $K_I^2$ has a bounded symbol.
\end{rem}

We now easily state the proof of Theorem \ref{XX}.
\vskip 0.3cm
\noindent \textit{Proof of Theorem \ref{XX}}. The forward implication is already known by previous results (see the reasoning laid out after the statement of Theorem \ref{XX}) and the backwards implication is Theorem \ref{2}.
\vskip 0.5cm

A long standing open conjecture regarding symbols of bounded truncated Toeplitz operators is the following.
\begin{conj}\label{conj}
Every bounded truncated Toeplitz operator on $K_I^2$ has a bounded symbol if and only if $I$ is one-component.
\end{conj}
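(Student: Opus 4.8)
The plan is to treat the two implications separately, since one is already available in the literature while the other is the genuinely open part of the conjecture. For the implication that $I$ one-component implies every bounded truncated Toeplitz operator on $K_I^2$ has a bounded symbol, I would simply invoke Corollary 2.5 of \cite{baranov2010symbols}, which shows that a one-component inner function satisfies the equivalent conditions of Theorem \ref{3.2}; condition (1) of that theorem is precisely the bounded-symbol property. So all the work lies in the converse: showing that if every bounded truncated Toeplitz operator on $K_I^2$ admits a bounded symbol then $I$ must be one-component, or contrapositively, that a non-one-component inner function always carries a bounded truncated Toeplitz operator with no bounded symbol.

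For this converse I would exploit the new equivalence proved in this paper. By Theorem \ref{XX}, the bounded-symbol property is equivalent to every compact truncated Toeplitz operator on $K_I^2$ having a symbol in $I C(\mathbb{T})$, and by Propositions \ref{propcompact} and \ref{starS} the latter is equivalent to surjectivity of the map ${}^*L$. Thus it would suffice to prove that whenever $\{ z \in \mathbb{D} : |I(z)| < \eta \}$ is disconnected for every $\eta \in (0,1)$, the range of ${}^*L$ is a proper subspace of $\mathcal{T}_c(I)$; equivalently, to exhibit a single compact truncated Toeplitz operator on $K_I^2$ with no symbol of the form $I\phi$, $\phi \in C(\mathbb{T})$. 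This reformulation is attractive because it replaces the quantitative comparison of Carleson embeddings in Theorem \ref{3.2} with the more operator-theoretic question of whether an explicit, densely-defined image fills out $\mathcal{T}_c(I)$.

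The concrete strategy I would pursue uses condition (2) of Theorem \ref{3.2}: the bounded-symbol property is equivalent to $\mathcal{C}_1(I^2) = \mathcal{C}_2(I^2)$. To break this equality from a disconnected sublevel set, I would attempt to build a finite positive measure $\mu$ on $\mathbb{T}$ that embeds $K_{I^2}^2$ continuously into $L^2(\mu)$ but fails to embed $K_{I^2}^1$ into $L^1(\mu)$. The guiding idea is that the distinct components of $\{|I| < \eta\}$ abut the circle along essentially disjoint sets, so one can localise reproducing kernels to the separate components and place the mass of $\mu$ so that it is diffuse enough to be controlled in the $L^2$ pairing yet concentrates in a way that defeats the $L^1$ estimate on functions supported near one component. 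Making this precise would require the Aleksandrov--Clark disintegration of $K_{I^2}^p$ together with sharp two-sided kernel estimates on each component, and then testing the two embeddings against this model measure.

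The hard part, and the reason this remains a conjecture rather than a theorem, is exactly this construction. The hypothesis that $I$ is not one-component is extremely weak: the sublevel sets may be disconnected in arbitrarily irregular, non-uniform ways, with components of wildly varying size and harmonic measure, so there is no canonical geometry to localise against and no a priori quantitative control relating disconnectivity to the gap between the $p=1$ and $p=2$ embedding constants. All known results produce the bounded-symbol property from sufficient regularity of $I$ (one-component, or the two conditions of \cite{baranov2010symbols}), whereas the converse would need a general mechanism turning mere disconnectivity into a witnessing measure or operator. No such mechanism is presently known, and supplying one is the central obstacle.
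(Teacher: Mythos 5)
This statement is an open conjecture, and the paper offers no proof of it --- the most it does is establish (via Theorem \ref{XX}) that it is equivalent to the reformulation in terms of compact truncated Toeplitz operators with symbols in $I\,C(\mathbb{T})$. So there is no ``paper proof'' to match your attempt against, and your proposal, as you yourself concede in its final paragraph, is not a proof either. To its credit, the proposal gets all the known terrain right: the implication that one-component $I$ yields the bounded-symbol property is indeed settled by Corollary 2.5 of \cite{baranov2010symbols} together with condition (1) of Theorem \ref{3.2}; the equivalence you invoke between the bounded-symbol property and surjectivity of ${}^*L$ is exactly the content of Theorem \ref{2} and Propositions \ref{propcompact} and \ref{starS}; and your target for breaking condition (2) is oriented correctly, since the inclusion $\mathcal{C}_1(I^2) \subseteq \mathcal{C}_2(I^2)$ always holds (if $f \in K_I^2$ then $f^2$ lies in $H^1 \cap I^2\overline{zH_0^1}$, so an $L^1$-embedding bound for $K_{I^2}^1$ forces the $L^2$-embedding bound), and the open direction is precisely producing $\mu \in \mathcal{C}_2(I^2) \setminus \mathcal{C}_1(I^2)$.

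The genuine gap is the entire converse direction: your third paragraph is a heuristic, not an argument. Nothing in it supplies, or even reduces to a checkable criterion, the construction of a witnessing measure from the bare hypothesis that $\{z : |I(z)| < \eta\}$ is disconnected for every $\eta$; ``localise reproducing kernels to the separate components'' presupposes two-sided kernel estimates that are themselves only available under regularity hypotheses of one-component type, so the sketch is circular at its critical step. Since no such mechanism is known --- this is exactly why the statement is stated as Conjecture \ref{conj} and why the paper's contribution is only the equivalent reformulation in the subsequent conjecture --- the proposal should be read as a correct survey of the problem's status plus an honest research plan, not as a proof attempt that could be repaired locally.
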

We can now state an equivalent formulation of the above conjecture, which is the following.
\begin{conj}
Every compact truncated Toeplitz operator on $K_I^2$ has a symbol in $I C ( \mathbb{T} )$ if and only if $I$ is one-component.
\end{conj}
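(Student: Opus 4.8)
The plan is to deduce this conjecture from Conjecture \ref{conj} by substituting through the biconditional established in Theorem \ref{XX}; the content of the statement is precisely that it is logically equivalent to Conjecture \ref{conj}, not that it can be settled independently. Theorem \ref{XX} asserts that the equivalent conditions of Theorem \ref{3.2} --- in particular the condition that every bounded truncated Toeplitz operator on $K_I^2$ admits a bounded symbol --- hold if and only if every compact truncated Toeplitz operator on $K_I^2$ has a symbol in $I C(\mathbb{T})$. I would begin by isolating this equivalence as the bridge between the two formulations.

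For the forward direction I would suppose $I$ is one-component. By Corollary 2.5 of \cite{baranov2010symbols} the equivalent conditions of Theorem \ref{3.2} are then satisfied, so every bounded truncated Toeplitz operator on $K_I^2$ has a bounded symbol. Theorem \ref{XX} immediately yields that every compact truncated Toeplitz operator on $K_I^2$ has a symbol in $I C(\mathbb{T})$. This half is therefore unconditionally provable with the tools assembled in the paper.

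For the reverse direction I would suppose every compact truncated Toeplitz operator on $K_I^2$ has a symbol in $I C(\mathbb{T})$. By Theorem \ref{XX} this is equivalent to every bounded truncated Toeplitz operator on $K_I^2$ admitting a bounded symbol, and at this point one must invoke the reverse (open) implication of Conjecture \ref{conj} --- that this property forces $I$ to be one-component --- to conclude. Putting the two directions together shows that the present conjecture is equivalent to Conjecture \ref{conj}: each is obtained from the other by replacing the phrase ``every bounded truncated Toeplitz operator has a bounded symbol'' with ``every compact truncated Toeplitz operator has a symbol in $I C(\mathbb{T})$'', a substitution licensed exactly by Theorem \ref{XX}.

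The main obstacle is the reverse direction, which coincides with the unresolved half of the original conjecture: it is known (again via Corollary 2.5 of \cite{baranov2010symbols}) that one-component inner functions produce bounded symbols, but no converse characterisation is available. Consequently the new conjecture cannot be settled by the duality machinery developed here; it inherits the same open status as Conjecture \ref{conj}, and the genuine contribution is the clean reduction of one conjecture to the other furnished by Theorem \ref{XX}.
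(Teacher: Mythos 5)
Your proposal is correct and matches the paper's treatment: the statement is offered only as an equivalent reformulation of Conjecture \ref{conj}, with the equivalence supplied exactly by the biconditional of Theorem \ref{XX} (and the one-component direction unconditionally available via Corollary 2.5 of \cite{baranov2010symbols}), while the converse direction remains open. You correctly identify that nothing more can be proved with the paper's machinery, which is precisely the paper's own position.
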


\section*{Acknowledgements}
The author is grateful to EPSRC for financial support. 
\newline The author would like to thank Professor Partington for his guidance.
\newline The author is grateful to the referee for their valuable comments.

\section*{Declarations}
Declarations of interest: none.
\newline Competing interests: none.
\newline This research was supported by the EPSRC (grant number 1972662). 
\newline The sponsor (EPSRC) had no role in study design; in the collection, analysis and interpretation of data or in  writing of the report; and in the decision to submit the article for publication.

\bibliographystyle{plain}
\bibliography{bibliography.bib}

\end{document}